\documentclass[11pt, reqno]{article}
\textwidth6.5in \textheight9.5in \hoffset-0.10in \voffset-1.0in
\oddsidemargin0.15in \evensidemargin0.15in
\usepackage{amsmath}
\usepackage{amsthm}
\usepackage{amssymb}
\usepackage{latexsym}
\usepackage{color}

\newtheorem{thm}{Theorem}[section]

\newtheorem{lem}{Lemma}[section]
\newtheorem{prop}{Proposition}[section]

\newtheorem{rem}{Remark}[section]

\newtheorem{assumption}{Assumption}[section]
\numberwithin{equation}{section}

\begin{document}
\title{ Nonzero-sum Discrete-time Stochastic Games with Risk-sensitive Ergodic Cost Criterion}

\author{Bivakar Bose\thanks{Department of Mathematics, Indian Institute of Technology Guwahati,
Guwahati-781039, India, email: b.bivakar@iitg.ernet.in}  \;   Chandan Pal\thanks{Department of Mathematics, Indian Institute of Technology Guwahati,
Guwahati-781039, India, email: cpal@iitg.ernet.in}   \;   Somnath Pradhan\thanks{
Department of Mathematics, Indian Institute Science Education and Research,
Bhopal, Bhopal-462066  India, email: somnath@iiserb.ac.in} \; and  Subhamay Saha\thanks{
Department of Mathematics, Indian Institute of Technology,
Guwahati, Guwahati-781039 India, email: saha.subhamay@iitg.ac.in}}
\date{}

\maketitle \baselineskip20pt
\parskip10pt
\parindent.4in
\begin{abstract}
\noindent In this paper we study infinite horizon nonzero-sum stochastic games for controlled discrete-time Markov chains on a Polish state space with risk-sensitive ergodic cost criterion. Under suitable assumptions  we show that the associated ergodic optimality equations admit unique solutions. Finally, the existence of Nash-equilibrium in randomized stationary strategies is established by showing that an appropriate set-valued map has a fixed point.
\end{abstract}

\noindent {\bf Key Words} : Nonzero-sum game, risk-sensitive ergodic cost criterion, optimality equations, Nash equilibrium, set-valued map

\noindent {\bf Mathematics Subject Classification}: 91A15, 91A50
\section{Introduction}
\label{intro}
	We consider infinite horizon ergodic-cost risk-sensitive two-person nonzero-sum stochastic games for discrete time Markov decision processes (MDPs) on a general state space with bounded cost function. We first establish the existence of unique solutions of the corresponding optimality equations. This is used to establish the existence of optimal randomized stationary strategies for a player fixing the strategy of the other player. We then define a suitable topology on the space of randomized stationary strategies. Then under certain separability assumptions we show that a suitably defined set-valued map has a fixed point. This gives the existence of a Nash equilibrium for the nonzero-sum game under consideration. The main step towards this end is to establish the upper semi-continuity of the defined set-valued map.  

In a stochastic control problem, since the cost payable depends on the random evolution of the underlying stochastic process, the same is also random. So the most basic approach is to minimize the expected cost. This is the risk-neutral approach. But the obvious short coming of this approach is that it does not take care of the risk factor. In the risk-sensitive criterion, the expected value of the exponential of the cost is considered. Generally the risk associated with a random quantity is quantified by the standard deviation. Hence risk-sensitive criterion provides significantly better  protection from the risk since it captures the effects of the higher order moments of the cost as well, see \cite{whittle1981risk} for more details. The analysis of risk-sensitive control is technically more involved because of the exponential nature of the cost; accumulated costs are multiplicative over time as opposed to additive in the risk-neutral case. Risk-sensitive control problems also has deep connections to robust control theory, control theory literature which deals with model uncertainty, see \cite{whittle1981risk} and references therein. Risk-sensitive criterion also arises naturally in portfolio optimization problems in mathematical finance, see \cite{Sheu05} and associated references. Nonzero-sum stochastic games arise naturally in strategic multi-agent decision-making system like socioeconomic systems \cite{Jackson08} , network security \cite{basar11}, routing and scheduling \cite{Roughgarden_2016}  and so on.

Following the seminal work of of Howard and Matheson \cite{Matheson72} there has been a lot of work on risk-sensitive control problems. For an up to date survey on ergodic risk-sensitive control problems with underlying stochastic process being discrete-time Markov chains, see \cite{Borkar23}. In the multi-controller set up, discrete-time risk-sensitive zero-sum games on countable state space have been studied in  \cite{Ghosh14,ghosh2023discrete}. In \cite{Rieder17} the authors consider discrete-time zero-sum risk-sensitive stochastic games on Borel state space for both discounted and ergodic cost criterion. Their analysis involves transforming the original risk-sensitive problem to a  risk-neutral problem. Nonzero-sum risk-sensitive stochastic games with both discounted and ergodic cost criterion for countable state space and bounded cost has been studied in \cite{basu2018nonzero, wei2019risk}. In \cite{wei2021risksensitivenonzero}  discrete-time nonzero-sum stochastic games under the risk-sensitive ergodic cost criterion with countable state space and unbounded cost function has been investigated. To the best of our knowledge, this is the first paper which considers risk-sensitive nonzero-sum games on a general state space. The analysis of nonzero-sum games on general state space is significantly more involved as compared to the case of countable state space. One of the reasons for this is that in case of countable state space the topology on the space of randomized stationary controls is fairly simple to work with. In case of general state space this becomes a substantial challenge. In the risk-neutral setup itself the analysis becomes substantially involved and requires additional assumptions. In the risk-neutral set up discrete-time nonzero-sum ergodic cost games with general state space has been studied in \cite{ghosh1998stochastic, Altman02, Chen_discrete19}. In \cite{ghosh1998stochastic}, the authors prove the existence of a Nash-equilibrium under an additive reward additive transition(ARAT) assumption. In \cite{Altman02}, the authors establish the existence of an $\epsilon$-Nash equilibrium. In \cite{Chen_discrete19}, the authors assume that the transition law is a combination of finitely many probability measures. In this work we also assume ARAT condition.  

In this paper we first consider ergodic optimality equations which correspond to control problems with the strategy of one player fixed. We show the existence of unique solutions to these equations. We follow a span contraction approach. For this analysis we make certain ergodicity assumption along with a few extra assumptions on the state transition kernel. Analogous assumptions appear in \cite{Stettner99} where the authors consider risk-sensitive control problems with ergodic cost criterion. In order to establish the existence of a Nash equilibrium we first define an appropriate set valued map. In order to  to show the existence of a Nash equilibrium for the nonzero-sum game we use Fan's fixed point theorem \cite{Fan1952fixed}. This involves showing that the defined set valued map is upper semi-continuous under the appropriate topology.  

The rest of the paper is organized as follows. Section 2 introduces the game model, some preliminaries and  notations. In section 3 we show the existence of a unique solution to the optimality equation using a certain span-norm contraction. Existence of the Nash-equilibrium  has been shown in section 4.
\section{The Game Model}
In this section, we present the discrete-time nonzero-sum stochastic game model and introduce the notations utilized throughout the paper.
The following elements are needed to describe the discrete-time nonzero-sum stochastic game:
\begin{align*}\label{model}
	\{X,A,B,P,c_1, c_2\},
\end{align*}	where each component is described below.

\begin{itemize}
	\item $X$ is the state space assumed to be Polish space endowed with the Borel $\sigma$-algebra $\mathcal{X}$. 
	
	\item $A$ and $B$ are action spaces of player $1$ and $2$ respectively, assumed to be compact metric spaces. Let $\mathcal{A}$ and $\mathcal{B}$ denote the Borel $\sigma$-algebras on $A$ and $B$ respectively.

	\item $P$ is the transition kernel from $X\times A\times B \to \mathcal{P}(X)$, where for any metric space $D$, $\mathcal{P}(D)$ denotes the space of all probability measures on $D$ with the topology of weak convergence.
	
	\item $c_i:X\times A\times B \to \mathbb{R}$, $i=1,2,$ is one-stage cost function for player $i$, assumed to be bounded and continuous on $A\times B$. Since, the cost is bounded, without loss of generality let, $0\leq c_i\leq \bar{c}$.
\end{itemize} 

At each stage(time) the players observe the current state $x \in X$ of the system and then player $1$ and $2$ independently choose actions $a \in A$ and $ b\in B$ respectively. As a consequence two things happen:

\begin{itemize}
	\item[(i)] player $i, i=1,2$, pays an immediate cost $c_i(x,a,b).$
	
	\item[(ii)] the system moves to a new state $y$ with the distribution $P(\cdot|x,a,b).$
\end{itemize}

The whole process then repeats from the new state $y$. The cost accumulates throughout the course of the game. The planning horizon is taken to be infinite and each player wants to minimize his/her infinite-horizon risk-sensitive cost with respect to some cost	criterion, which is in our case defined by \eqref{cost criterion} below.

At each stage the players choose their actions independently on the basis of the available information. The information available for decision making at time $t \in \mathbb{N}_0:=\{0,1,2, \ldots\}$ is given by the history of the process up to that time
\begin{equation*}
	h_t:=\left(x_0, a_0, b_0, x_1, a_1, b_1, \ldots, a_{t-1}, b_{t-1}, x_t\right) \in H_t,
\end{equation*}
where $H_0=X, H_t=H_{t-1} \times(A \times B \times X), \ldots, H_{\infty}=(X \times A \times B)^{\infty}$ are the history spaces. The history spaces are endowed with the corresponding Borel $\sigma-$algebra. A strategy for player 1 is a sequence $\pi^1=\left\{\pi_t^1\right\}_{t \in \mathbb{N}_0}$ of stochastic kernels $\pi_t^1: H_t \rightarrow \mathcal{P}(A)$. The set of all strategies for player 1 is denoted by $\Pi_1$. A strategy $\pi^1 \in \Pi_1$ is called a Markov strategy if
\begin{equation*}
	\pi_t^1\left(h_{t-1}, a, b, x\right)(\cdot)=\pi_t^1\left(h_{t-1}^{\prime}, a^{\prime}, b^{\prime}, x\right)(\cdot)
\end{equation*}
for all $h_{t-1}, h_{t-1}^{\prime} \in H_{t-1}, a, a^{\prime} \in A, b,b^{\prime}  \in B, x \in X, t \in \mathbb{N}_0$. Thus a Markov strategy for player 1 can be identified with a sequence of measurable maps $\left\{\varPhi_t^1\right\}, \varPhi_t^1: X \rightarrow \mathcal{P}(A)$. A Markov strategy $\left\{\Phi_t^1\right\}$ is called a stationary strategy if $\varPhi_t^1=\Phi: X \rightarrow \mathcal{P}(A)$ for all $t$. Let $M_1$ and $S_1$ denote the set of Markov and stationary strategies for player 1 , respectively. The strategies for player 2 are defined similarly. Let $\Pi_2, M_2,$ and $S_2$ denote the set of arbitrary, Markov and stationary strategies for player 2, respectively.

Given an initial distribution $\tilde{\pi}_0 \in \mathcal{P}(X)$ and a pair of strategies $(\pi^1, \pi^2) \in \Pi_1 \times \Pi_2$, the corresponding state and action processes $\left\{X_t\right\},\left\{A_t\right\},\left\{B_t\right\}$ are stochastic processes defined on the canonical space $(H_{\infty}, \mathfrak{B}(H_{\infty}), P_{\tilde{\pi}_0}^{\pi^1, \pi^2})$(where $\mathfrak{B}(H_{\infty})$ is the Borel $\sigma$-field on $H_{\infty}$) via the projections $X_t\left(h_{\infty}\right)=x_t, A_t\left(h_{\infty}\right)=a_t, B_t\left(h_{\infty}\right)=b_t$, where $P_{\tilde{\pi}_0}^{\pi^1, \pi^2}$ is uniquely determined by $\pi^1, \pi^2$ and $\tilde{\pi}_0$ by Ionescu Tulcea's theorem \cite[Proposition 7.28]{Bertsekas1996stochastic}.  When $\tilde{\pi}_0=\delta_{x_0}$ (the dirac measure at $x_0$),$~ x_0\in X$, we simply write this probability measure as $P_{x_0}^{\pi_1, \pi_2}$. For $h_{t}\in H_{t}, a \in A, b\in B$, we have
\begin{align*}
	&P_{x_0}^{\pi^1, \pi^2}(X_0=x_0)=1,\\
	&P_{x_0}^{\pi^1, \pi^2}(X_{t+1}\in E|h_{t},A_t=a,B_t=b)=P(E|x_t,a,b) ~\forall E\in \mathcal{X},\\
	&P_{x_0}^{\pi^1, \pi^2}(A_t\in F, B_t\in G|h_{t})=\pi_t^1(h_{t})(F)\pi_t^2(h_{t})(G) ~\forall F\in \mathcal{A}, ~\forall G\in \mathcal{B}.
\end{align*} 

Let the corresponding expectation operator be denoted by $E_{{\tilde{\pi}}_0}^{\pi^1, \pi^2}(E_{x_0}^{\pi^1, \pi^2})$ with respect to the probability measure $P_{{\tilde{\pi}}_0}^{\pi^1, \pi^2}(P_{x_0}^{\pi^1, \pi^2})$. Now from \cite{Hernandez2012adaptive} we know that under any $(\Phi\times \Psi) \in S_1\times S_2$, the corresponding stochastic process $X_t$ is a Markov process.

\noindent {\bf Ergodic cost criterion:} We now define the risk-sensitive ergodic cost criterion for nonzero-sum discrete-time game. Let $(X_t,A_t,B_t)$ be the corresponding process with $X_0=x\in X$ and $\theta>0$ be the risk-sensitive parameter. For a pair of strategies $(\pi^1,\pi^2)\in (\Pi_1\times \Pi_2)$, the risk-sensitive ergodic cost criterion for player $i=1,2$ is given by	
\begin{equation}\label{cost criterion}
	J^{\pi^1,\pi^2}_{i}(x)=\limsup_{n\to \infty}\frac{1}{n}lnE^{\pi^1,\pi^2}_x \bigg[e^{{\theta}{\sum_{t=0}^{n-1}c_i(x_t,a_t,b_t)}}\bigg]
\end{equation}
Since the risk-sensitive parameter remains the same throughout, we assume without loss of generality that $\theta = 1$. Note that, $J^{\pi^1,\pi^2}_{i}$ for $i=1,2$ are bounded as our cost functions are bounded.

\noindent {\bf Nash equilibrium}: A pair of strategies $(\pi^{*1},\pi^{*2})\in \Pi_1\times\Pi_2$ is called a Nash equilibrium(for the ergodic cost criterion) if
\begin{equation*}
	J^{\pi^{*1},\pi^{*2}}_{1}(x)\leq J^{\pi^1,\pi^{*2}}_{1}(x)~ \text{for all } \pi^1\in \Pi_1 ~\text{and}~ x\in X
\end{equation*}and
\vspace{1mm}
\begin{equation*}
	J^{\pi^{*1},\pi^{*2}_2}(x)\leq J^{\pi^{*1},\pi^2}_{2}(x)~ \text{for all } \pi^2\in \Pi_2~ \text{and}~ x\in X.
\end{equation*}  Our primary goal is to establish the existence of a Nash equilibrium in stationary strategies.

For $i=1,2$, let us define transition measures $\tilde{P}_i$ from $X\times A\times B \to\mathcal{P}(X)$ by
\begin{equation}\label{Pi tilde}
	\tilde{P}_i(dy|x,a,b)=e^{c_i(x,a,b)}P(dy|x,a,b) .
\end{equation} 
Moreover, define for  $i=1,2 ,~\varphi \in \mathcal{P}(A)$ and  $\psi \in \mathcal{P}(B)$
\begin{equation*}
	P(dy|x,\varphi,\psi):=\int_B \int_A P(dy|x,a,b)\varphi (da) \psi (db)
\end{equation*}
and
\begin{equation*}
	\tilde{P}_i(dy|x,\varphi,\psi):=\int_B \int_A \tilde{P}_i(dy|x,a,b)\varphi (da) \psi (db).
\end{equation*}
Obviously $\tilde{P}_i$ for $i=1,2$ is in general is not a probability measure. The normalizing constant for $x\in X, \varphi \in \mathcal{P}(A),$ and $\psi \in \mathcal{P}(B)$ is given by
\begin{equation}\label{c}
	\tilde{c}_i(x,\varphi,\psi)=\int_X 	\tilde{P}_i(dy|x,\varphi,\psi)=\int_B\int_A e^{c_i(x,a,b)}\varphi (da) \psi (db).
\end{equation} Since $0\leq c_i\leq \bar{c}$, the function $\tilde{c}_i$ is also bounded. More precisely $1\leq \tilde{c}_i(x,\varphi,\psi)\leq e^{\bar{c}} $ for  $i=1,2$ and for each $x\in X, \varphi\in \mathcal{P}(A)$ and $\psi\in \mathcal{P}(B)$. 
Thus for $i=1,2$ \begin{equation}\label{P2}
	\hat{P}_i(\cdot|x,\varphi,\psi):=\frac{\tilde{P}_i(\cdot|x,\varphi,\psi)}{\tilde{c}_i(x,\varphi,\psi)}
\end{equation}	defines a probability transition kernel and we also use the notation $\hat{c}_i(x,\varphi,\psi)$:= $\ln \tilde{c}_i(x,\varphi,\psi)$ for $i=1,2$. 

We will use the above transformations, to convert our optimality equations \eqref{main equation1} and \eqref{main equation2} into a well-known equation. This process is beneficial as it helps us to prove the existence of the unique solution to the optimality equations as we will see in the next section.

Define $\mathbb{B}(X)$, the space of all real valued bounded measurable functions on $X$ endowed with the supremum norm $\|.\|$. For a fixed $ \varphi \in \mathcal{P}(A),$  and  $v\in \mathbb{B}(X)$  define the operator:
\begin{equation}\label{operator1}
	Tv(x)=\inf_{\psi \in \mathcal{P}(B)}\bigg[\hat{c}_2(x,\varphi,\psi)+ln \int_X e^{v(y)}\hat{P}_2(dy|x,\varphi,\psi)\bigg].
\end{equation}
Due to the dual representation of the exponential certainty equivalent \cite[Lemma 3.3]{Hernandez1996risk} it is possible to write \eqref{operator1} as
\begin{equation}\label{entropy operator}
	Tv(x)=\inf_{\psi}\sup_{\mu}\bigg[\hat{c}_2(x,\varphi,\psi)+ \int_X v(y)\mu(dy)-I(\mu,\hat{P}_2(\cdot |x,\varphi,\psi)\bigg],
\end{equation} where the supremum is over all probability measures $\mu \in \mathcal{P}(X)$ and $I(p,q)$ is the relative entropy of the two probability measures $p,q$ which is defined by 
\begin{equation*}
	I(p,q):= \int_X ln\frac{dp}{dq}p(dx)
\end{equation*} 
when $p<< q$ and $+\infty$ otherwise.	Note that the supremum in \eqref{entropy operator} is attained at the probability measure given by 
\begin{equation}\label{psi}
	\mu^{\varphi}_{x \psi v}(E):=\frac{\int_{E}e^{v(y)}\hat{P}_2(dy |x,\varphi,\psi)}{\int_{X}e^{v(y)}\hat{P}_2(dy |x,\varphi,\psi)}
\end{equation} for measurable set $E \subset X$. Obviously 	$\mu^{\varphi}_{x \psi v}$ can be interpreted as a transition kernel.

Let $v\in \mathbb{B}(X)$. The span semi-norm of $v$ is defined as:
\begin{equation}\label{span norm}	
	\|v\|_{sp}=\sup_{x \in X} v(x)-\inf_{x\in X} v(x).
\end{equation}

In the last part of this section we make a couple of assumptions that will be in force throughout the rest of the paper. First we will consider the following continuity assumption which is quite standard in literature, see \cite{Rieder17,Stettner99,ghosh1998stochastic,Parthasarathy1982existence} for instance. This will allow us to show that the map $T$ defined in \eqref{operator1} is a contraction in the span semi-norm.

\begin{assumption}\label{assumption1}
	
	For a fixed $x\in X$ the transition measure $P$ is strongly continuous in $(a,b)$, i.e. for all bounded and measurable $v: X \to \mathbb{R}$ we have that  $(a,b) \mapsto \int_X v(y)P(dy|x,a,b)$ is continuous in $(a,b)$.
\end{assumption}
It follows  from \eqref{Pi tilde} that $\tilde{P}_i(dy|x,a,b)$  is also strongly continuous in $(a,b)$ for $i=1,2$ which lead us the  following remark.

\begin{rem}\label{remark1}
	Due to the fact that we consider for any metric space $D$, the space $\mathcal{P}(D)$ is  endowed with the topology of weak convergence, from  Assumption \ref{assumption1} it follows immediately that for all bounded and measurable functions $v:X \mapsto \mathbb{R}$ and a fixed $x\in X$ the map $(\varphi, \psi)\mapsto \int_{X}v(y)\hat{P}_i(dy|x, \varphi, \psi),$
	$i=1,2, $ is continuous in $(\varphi,\psi) \in \mathcal{P}(A)\times\mathcal{P}(B)$.
\end{rem} 	 	
Next we have the following ergodicity assumption. 
\begin{assumption}\label{assumption2}
	
	\item[(i)] There exists a real number $0<\delta<1$ such that
	\begin{equation*}
		\sup \left\|{P}(\cdot | x, \varphi, \psi)-{P}\left(\cdot \mid x^{\prime}, \varphi^{\prime}, \psi^{\prime}\right)\right\|_{\mathrm{TV}} \leq 2 \delta,
	\end{equation*}
	where the supremum is over all $x, x^{\prime} \in X, \varphi, \varphi^{\prime} \in \mathcal{P}(A), \psi, \psi^{\prime} \in \mathcal{P}(B)$ and $\|\cdot\|_{\mathrm{TV}}$ denotes the total variation norm.
	\item[(ii)] For $x \in X, a \in A, b \in B,~ P(\mathcal{O} | x, a, b)>0$ for any open set $\mathcal{O} \subset X$.
\end{assumption}

	\section{Solution to the optimality equations}

In this section, we demonstrate that the operator $T$ defined in \eqref{operator1} is a contraction. The fixed point of  $T$  corresponds to the solution of the optimality equation for player 1. In the latter part of this section, we define another operator $U$ corresponding to player 2 and establish results analogous to those obtained for $T$.

\begin{prop}\label{theorem 5.4}
	Under Assumptions \ref{assumption1} and \ref{assumption2} the operator  $T$ maps $\mathbb{B}(X)$ to $\mathbb{B}(X)$ and for each $M>0$, there exists a positive constant $\alpha(M)<1$ such that for   all $v_1, v_2 \in \mathbb{B}_M(X)$
	\begin{equation*}
		\left\|T v_1-T v_2\right\|_{sp} \leq \alpha(M)\left\|v_1-v_2\right\|_{sp},
	\end{equation*}
	where $\mathbb{B}_M(X)=\{v\in \mathbb{B}(X):\|v\|_{sp}\leq M \}$.
\end{prop}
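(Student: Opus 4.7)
The proof has two main stages.

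\emph{Stage 1: $T$ preserves $\mathbb{B}(X)$.} For any $v\in\mathbb{B}(X)$, the integrand in \eqref{operator1} is uniformly bounded: by \eqref{c} and \eqref{P2} one has $0\le\hat c_2\le\bar c$ and $e^{-\|v\|}\le\int_X e^{v(y)}\hat P_2(dy|x,\varphi,\psi)\le e^{\|v\|}$, so $|Tv(x)|\le\bar c+\|v\|$. For measurability of $Tv$ in $x$, Remark~\ref{remark1} yields continuity of the integrand in $\psi$ for each fixed $x$, and $\mathcal{P}(B)$ is a compact metric space (since $B$ is), so the infimum is attained; a standard measurable selection theorem then delivers a measurable selector $\psi^\ast:X\to\mathcal{P}(B)$ and measurability of $x\mapsto Tv(x)$.

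\emph{Stage 2: Span contraction.} Let $v_1,v_2\in\mathbb{B}_M(X)$ and set $K:=\|v_1-v_2\|_{sp}$. Exploiting the translation property $T(v+c)=Tv+c$, we shift $v_2$ so that $0\le v_1-v_2\le K$ pointwise. Let $\psi_i^\ast(x)$ be a measurable selector achieving the infimum in $Tv_i(x)$. Using $\psi_2^\ast(x)$ as a sub-optimal choice for $Tv_1(x)$ and $\psi_1^\ast(x')$ for $Tv_2(x')$ yields, for all $x,x'\in X$,
\begin{align*}
(Tv_1-Tv_2)(x)\;&\le\;\ln\int_X e^{v_1(y)-v_2(y)}\,\mu^\varphi_{x,\psi_2^\ast(x),v_2}(dy),\\
(Tv_1-Tv_2)(x')\;&\ge\;\ln\int_X e^{v_1(y)-v_2(y)}\,\mu^\varphi_{x',\psi_1^\ast(x'),v_2}(dy),
\end{align*}
where $\mu^\varphi_{\cdot,\cdot,v_2}$ is the tilted measure from \eqref{psi}. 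Subtracting, using $\ln(a/b)\le(a-b)/b$ together with $\int e^{v_1-v_2}d\nu\ge 1$ and the oscillation bound $|\int g\,d(\mu-\nu)|\le\tfrac{e^K-1}{2}\|\mu-\nu\|_{\mathrm{TV}}$ for $g=e^{v_1-v_2}\in[1,e^K]$, one arrives at
\begin{equation*}
\|Tv_1-Tv_2\|_{sp}\;\le\;\frac{e^K-1}{2}\,\sup_{x,x',\psi,\psi'}\bigl\|\mu^\varphi_{x,\psi,v_2}-\mu^\varphi_{x',\psi',v_2}\bigr\|_{\mathrm{TV}}.
\end{equation*}

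\emph{Stage 3 and main obstacle.} The decisive step is to bound the supremum of TV distances on the right. Writing $\mu^\varphi_{x,\psi,v_2}$ explicitly via \eqref{psi}, \eqref{P2} and \eqref{Pi tilde} as a normalization of $e^{v_2(y)+c_2(x,a,b)}P(dy|x,a,b)$ integrated against $\varphi,\psi$, and invoking Assumption~\ref{assumption2}(i) on the base kernel $P$ together with the boundedness of $c_2$ (by $\bar c$) and $v_2$ (in span, by $M$), the aim is a bound of the form
\begin{equation*}
\sup_{x,x',\psi,\psi'}\bigl\|\mu^\varphi_{x,\psi,v_2}-\mu^\varphi_{x',\psi',v_2}\bigr\|_{\mathrm{TV}}\;\le\;2\delta\,\rho(M)
\end{equation*}
for some constant $\rho(M)\ge 1$ depending only on $M$ and $\bar c$. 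Combined with $K\le 2M$, one then sets $\alpha(M):=\delta\,\rho(M)(e^{2M}-1)/(2M)$ and must verify $\alpha(M)<1$. This is the hard part, because naive estimates on exponentially tilted measures can amplify TV distances by factors like $e^{O(M+\bar c)}$; one must carefully exploit the Doeblin-type minorization implicit in Assumption~\ref{assumption2}(i) (equivalently $\|P(\cdot|x,\varphi,\psi)-P(\cdot|x',\varphi',\psi')\|_{\mathrm{TV}}\le 2\delta<2$) so that the resulting contraction constant stays strictly below one.
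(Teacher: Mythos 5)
Your Stage 1 and the structural setup of Stage 2 are fine: the translation property, the measurable selectors, and the two inequalities bounding $(Tv_1-Tv_2)(x)$ above and below by $\ln\int_X e^{v_1-v_2}\,d\mu$ against the $v_2$-tilted measures of \eqref{psi} are all correct. The genuine gap is the passage from there to a span contraction. Linearizing the logarithm via $\ln(a/b)\le (a-b)/b$ together with the oscillation bound costs you the factor $(e^K-1)/2$ with $K=\|v_1-v_2\|_{sp}$, whereas the conclusion requires a bound of the form $\alpha(M)\,K$ with $\alpha(M)<1$. Since $(e^{K}-1)/(2K)\to\infty$ and the total variation distance between the tilted kernels at distinct states does not decay as $M$ grows (it is comparable, up to $M$- and $\bar c$-dependent constants, to the TV distance of the base kernels, which is of order $\delta$), the quantity $\tfrac{e^K-1}{2}\sup\|\mu-\nu\|_{\mathrm{TV}}$ exceeds $K$ once $M$ is moderately large. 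Indeed your own candidate $\alpha(M)=\delta\,\rho(M)(e^{2M}-1)/(2M)$ with $\rho(M)\ge1$ satisfies $\alpha(M)\to\infty$ as $M\to\infty$, so it cannot be $<1$ for all $M>0$ no matter how your Stage 3 is resolved. The route as set up cannot close.

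The missing idea is the Donsker--Varadhan dual representation \eqref{entropy operator}, which is what the paper's proof uses. Writing $Tv(x)=\inf_\psi\sup_\mu\big[\hat c_2+\int_X v\,d\mu-I(\mu,\hat P_2)\big]$ makes the dependence on $v$ \emph{affine}: in the four-term difference $(Tv_1-Tv_2)(x_2)-(Tv_1-Tv_2)(x_1)$, choosing the maximizing measures $\mu^{\varphi}_{x_2\Psi_2v_1}$ and $\mu^{\varphi}_{x_1\Psi_1v_2}$ makes the cost and entropy terms cancel, leaving exactly $\int_X(v_1-v_2)\,d\big(\mu^{\varphi}_{x_2\Psi_2v_1}-\mu^{\varphi}_{x_1\Psi_1v_2}\big)\le\|v_1-v_2\|_{sp}\,\big(\mu^{\varphi}_{x_2\Psi_2v_1}-\mu^{\varphi}_{x_1\Psi_1v_2}\big)(\Delta)$ via the Hahn--Jordan set $\Delta$ --- with no exponential factor in $K$. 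The contraction constant is then $\alpha(M)=\sup(\mu-\mu')(E)$ over all admissible data, and it is shown to be $<1$ by contradiction: if a sequence drove this to $1$, then the lower bound $\mu^{\varphi}_{x\psi v}(E)\ge e^{-\|v\|_{sp}}\hat P_2(E|x,\varphi,\psi)$ from \eqref{psi} and the two-sided comparison between $\hat P_2$ and $P$ coming from \eqref{Pi tilde}--\eqref{P2} would force $P(E_n|x_{2n},\cdot)-P(E_n|x_{1n},\cdot)\to1$, contradicting Assumption \ref{assumption2}(i). So you correctly identified that Assumption \ref{assumption2}(i) must act through the tilted measures' equivalence with $P$, but only the dual representation lets you exploit it without paying the $e^{O(M)}$ penalty.
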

\begin{proof} From the definition of $T$ it follows that $T$ transforms $\mathbb{B}(X)$ into itself and the infimum is attained.
	For given functions $v_1,v_2\in\mathbb{B}(X) $ and $x_1,x_2\in X$, let $\Psi_1,\Psi_2 \in S_2$ be such that for $i=1,2$
	\begin{equation*}
		T v_i(x_i)=\sup_\mu\big[\hat{c}_2(x_i,\varphi,\Psi_{i}(x_i))+ \int_X v_i(y)\mu(dy)-I(\mu,\hat{P}_2(\cdot |x_i,\varphi,\Psi_{i}^*(x_i))\big].	
	\end{equation*}
	Then we obtain that	
	\begin{equation*}
		\begin{aligned}
			& (T v_1)(x_2)-(T v_2)(x_2)-((T v_1)(x_1)-(T v_2)(x_1)) \\
			& \leq \sup _\mu\left\{\hat{c}_2\left(x_2, \varphi, \Psi_2(x_2)\right)+\int_X v_1\left(y\right) \mu(d y )- I\left(\mu, \hat{P_2}\left(\cdot | x_2, \varphi, \Psi_2(x_2)\right)\right)\right\} \\
			&-  \sup _\mu\left\{\hat{c}_2\left(x_2, \varphi, \Psi_2(x_2)\right)+\int_X v_2(y) \mu(d y )- I\left(\mu, \hat{P_2}\left(\cdot | x_2, \varphi, \Psi_2(x_2)\right)\right)\right\}\\
			& -\sup _\mu\left\{\hat{c}_2\left(x_1, \varphi, \Psi_1(x_1)\right)+\int_X v_1\left(y\right) \mu\left(d y\right )-I\left(\mu, \hat{P_2}\left(\cdot | x_1, \varphi, \Psi_1(x_1)\right)\right)\right\} \\
			& +\sup _\mu\left\{\hat{c}_2\left(x_1, \varphi, \Psi_1(x_1)\right)+\int_X v_2\left(y\right) \mu\left(d y \right)- I\left(\mu, \hat{P_2}\left(\cdot | x_1, \varphi, \Psi_1(x_1)\right)\right)\right\}\\
			& \leq \hat{c}_2\left(x_2, \varphi, \Psi_2(x_2)\right)+\int_X v_1\left(y\right) \mu^{\varphi}_{x_2\Psi_2v_1}(d y )- I\left( \mu^{\varphi}_{x_2\Psi_2v_1}, \hat{P_2}\left(\cdot | x_2, \varphi, \Psi_2(x_2)\right)\right) \\
			&-  \hat{c}_2\left(x_2, \varphi, \Psi_2(x_2)\right)-\int_X v_2(y) \mu^{\varphi}_{x_2\Psi_2v_1}(d y )+ I\left( \mu^{\varphi}_{x_2\Psi_2v_1}, \hat{P_2}\left(\cdot | x_2, \varphi, \Psi_2(x_2)\right)\right)\\
			&- \hat{c}_2\left(x_1, \varphi, \Psi_1(x_1)\right)-\int_X v_1\left(y\right) \mu^{\varphi}_{x_1\Psi_1v_2}(d y )+I\left(\mu^{\varphi}_{x_1\Psi_1v_2}, \hat{P_2}\left(\cdot | x_1, \varphi, \Psi_1(x_1)\right)\right) \\
			&+ \hat{c}_2\left(x_1, \varphi, \Psi_1(x_1)\right)+\int_X v_2\left(y\right) \mu^{\varphi}_{x_1\Psi_1v_2}(d y )-I\left(\mu^{\varphi}_{x_1\Psi_1v_2}, \hat{P_2}\left(\cdot | x_1, \varphi, \Psi_1(x_1)\right)\right) \\	&=\int_{\Delta}\left(v_1\left(y\right)-v_2\left(y\right)\right) \left(\mu^{\varphi}_{x_2\Psi_2v_1}-\mu^{\varphi}_{x_1\Psi_1v_2} \right)(d y )+
			\int_{\Delta^c}\left(v_1\left(y\right)-v_2\left(y\right)\right) \left(\mu^{\varphi}_{x_2\Psi_2v_1}-\mu^{\varphi}_{x_1\Psi_1v_2} \right)(d y )\\
			& \leq \sup_{y\in X} \left(v_1\left(y\right)-v_2\left(y\right)\right) \left(\mu^{\varphi}_{x_2\Psi_2v_1}-\mu^{\varphi}_{x_1\Psi_1v_2} \right)(\Delta)+ 
			\inf_{y\in X}\left(v_1\left(y\right)-v_2\left(y\right)\right) \left(\mu^{\varphi}_{x_2\Psi_2v_1}-\mu^{\varphi}_{x_1\Psi_1v_2} \right)({\Delta}^c)\\
			&=\|v_1-v_2\|_{sp}(\mu^{\varphi}_{x_2\Psi_2v_1}-\mu^{\varphi}_{x_1\Psi_1v_2})(\Delta),
		\end{aligned}
	\end{equation*}
	where the set $\Delta$ comes from the Hahn-Jordan decomposition of $\mu^{\varphi}_{x_2\Psi_2v_1}-\mu^{\varphi}_{x_1\Psi_1v_2}$ and $\Delta^c$ denotes the complement of $\Delta$. Now, taking supremum over $x_1,x_2\in X$ in the above set-up we have
	\begin{equation*}
		\left\|T v_1-T v_2\right\|_{sp} \leq \|v_1-v_2\|_{sp}\sup_{E\in \mathcal{X}}\sup_{x_1,x_2\in X}\sup_{\Psi_1 ,\Psi_2 \in S_2}(\mu^{\varphi}_{x_2\Psi_2v_1}-\mu^{\varphi}_{x_1\Psi_1v_2})(E).
	\end{equation*} 	
	We claim that 
	\begin{equation}\label{1.12}
		\sup_{v_1,v_2;\|v_1\|_{sp},\|v_2\|_{sp}\leq M}\sup_{E\in \mathcal{X}}\sup_{x_1,x_2\in X}\sup_{\Psi_1 ,\Psi_2 \in S_2}(\mu^{\varphi}_{x_2\Psi_2v_1}-\mu^{\varphi}_{x_1\Psi_1v_2})(E)=\alpha(M)<1.
	\end{equation}
	Suppose \eqref{1.12} does not hold. Then there exists sequences $\{v_{1n}\},\{v_{2n}\}$ with $\|v_{1n}\|_{sp}\leq M,\|v_{2n}\|_{sp}\leq M$, $\{E_n\}, E_n\in \mathcal{X},\{x_{1n}\},\{x_{2n}\}$ and $\{\Psi_{1n}\},\{\Psi_{2n}\}$ such that 
	\begin{equation*}
		(\mu^{\varphi}_{x_{2n}\Psi_{2n}v_{1n}}-\mu^{\varphi}_{x_{1n}\Psi_{1n}v_{2n}})(E_n)\rightarrow 1 ~~~\text{as}~~~ n\to \infty.
	\end{equation*}
	As $\mu^{\varphi}_{x_{2n}\Psi_{2n}v_{1n}}$ and $\mu^{\varphi}_{x_{1n}\Psi_{1n}v_{2n}}$ are probability measures therefore
	\begin{equation*}
		\mu^{\varphi}_{x_{2n}\Psi_{2n}v_{1n}}(E_n)\to 1 ~~~\text{as}~~~ n\to \infty,
	\end{equation*}
	and
	\begin{equation*}
		\mu^{\varphi}_{x_{1n}\Psi_{1n}v_{2n}}(E_n)\to 0 ~~~\text{as}~~~ n\to \infty. 
	\end{equation*}	
	Since for each $x\in X, \psi \in \mathcal{P}(B)$ and $v\in \mathbb{B}(X)$ from \eqref{psi} we get
	\begin{equation*}
		e^{-\|v\|_{sp}} \hat{P_2}\left(E | x, \varphi, \psi\right)\leq 	\mu^{\varphi}_{x \psi v}(E),
	\end{equation*}
	we have
	\begin{equation*}
		\hat{P_2}\left(E_n^c |x_{2n}, \varphi, \Psi_{2n}(x_{2n})\right)\to 0
		~~~\text{as}~~~ n\to \infty,	\end{equation*}
	and
	\begin{equation*}
		\hat{P_2}\left(E_n | x_{1n}, \varphi, \Psi_{1n}(x_{1n})\right)\to 0
		~~~\text{as}~~~ n\to \infty.
	\end{equation*}
	Consequently using 	\eqref{P2}  direct calculations imply 
	
	\begin{equation*}
		P\left(E_n^c |x_{2n}, \varphi, \Psi_{2n}(x_{2n})\right)\to 0
		~~~\text{as}~~~ n\to \infty,\end{equation*}
	and
	\begin{equation*}
		P\left(E_n | x_{1n}, \varphi, \Psi_{1n}(x_{1n})\right)\to 0
		~~~\text{as}~~~ n\to \infty.
	\end{equation*} Hence
	\begin{equation}\label{1.13}
		\lim_{n\to \infty}\bigg(P\left(E_n|x_{2n}, \varphi, \Psi_{2n}(x_{2n})\right)-P\left(E_n | x_{1n}, \varphi, \Psi_{1n}(x_{1n})\right)\bigg)=1
	\end{equation}
	But from   Assumption \ref{assumption2}$(i)$ we get,  
	$\forall x_1,x_2\in X, \forall \psi,\psi^{\prime}\in \mathcal{P}(B), \forall E\in \mathcal{X}$ 
	\begin{equation*}\label{1.14}
		P(E | x, \varphi, \psi)-P(E| x^{\prime}, \varphi, \psi^{\prime}) \leq  \delta,
	\end{equation*} which contradicts \eqref{1.13}. Hence  \eqref{1.12} holds and therefore the theorem also holds true. 
\end{proof}	
We will now make additional assumptions to show that $T$ is a global contraction in $\mathbb{B}_L(X).$ 

\begin{assumption}\label{assumption1.4}
	There exists $\lambda \in \mathcal{P}(X)$ such that $P(\cdot|x,a,b)<<\lambda$ for all $x\in X, a\in A,$ and $b\in B.$ Also let $h:X\times A\times B\times X \to \mathbb{R}$ be the Radon-Nikodym derivative of $P(\cdot|x,a,b)$ with respect to $ \lambda$.
\end{assumption}

\begin{assumption}\label{assumption1.5}
	\begin{equation*}\sup_{x,x'\in X}\sup_{y\in X}\sup_{a\in A}\sup_{b\in B} \frac{h(x,a,b,y)}{h(x',a,b,y)}=\kappa<\infty.\end{equation*}
\end{assumption}

\begin{lem}\label{global contraction}
	Under the Assumptions \ref{assumption1}, \ref{assumption1.4} and  \ref{assumption1.5}  the operator $T$ transforms $\mathbb{B}(X)$ into $\mathbb{B}_L(X)$, where  $L=ln\kappa+3\bar{c} $. Furthermore, $T$ is a global contraction in $\mathbb{B}_L(X).$
\end{lem}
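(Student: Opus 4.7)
My plan has two stages: first establish the span bound $\|Tv\|_{sp} \le L$ for arbitrary $v \in \mathbb{B}(X)$ (which gives $T(\mathbb{B}(X)) \subseteq \mathbb{B}_L(X)$), and then invoke Proposition \ref{theorem 5.4} with $M = L$ to upgrade the local contraction to a global one on $\mathbb{B}_L(X)$.

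For the span bound, I would fix $v \in \mathbb{B}(X)$ and $x_1, x_2 \in X$, and let $\psi^*_1 \in \mathcal{P}(B)$ attain the infimum in the definition of $Tv(x_1)$ (such a selector is available as noted in the proof of Proposition \ref{theorem 5.4}). Then by optimality
\[
Tv(x_2) - Tv(x_1) \le \bigl[\hat{c}_2(x_2,\varphi,\psi^*_1) - \hat{c}_2(x_1,\varphi,\psi^*_1)\bigr] + \ln \frac{\int_X e^{v(y)}\, \hat{P}_2(dy|x_2,\varphi,\psi^*_1)}{\int_X e^{v(y)}\, \hat{P}_2(dy|x_1,\varphi,\psi^*_1)}.
\]
The first bracket lies in $[-\bar{c}, \bar{c}]$ since $0 \le \hat{c}_2 \le \bar{c}$. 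For the second, I would derive the measure inequality $\hat{P}_2(\cdot|x_2,\varphi,\psi^*_1) \le \kappa\, e^{2\bar{c}}\, \hat{P}_2(\cdot|x_1,\varphi,\psi^*_1)$ and then take logs to conclude that the log-ratio is at most $\ln\kappa + 2\bar{c}$, so $Tv(x_2) - Tv(x_1) \le \ln \kappa + 3\bar{c} = L$. Swapping $x_1$ and $x_2$ gives $\|Tv\|_{sp} \le L$.

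To establish the measure inequality, I would first pass to the unnormalized $\tilde{P}_2$: Assumption \ref{assumption1.5} yields $h(x_2,a,b,y) \le \kappa h(x_1,a,b,y)$, and boundedness of $c_2$ gives $e^{c_2(x_2,a,b) - c_2(x_1,a,b)} \le e^{\bar{c}}$, so writing $\tilde{P}_2(dy|x,\varphi,\psi^*_1)$ as an integral of $e^{c_2(x,a,b)} h(x,a,b,y)$ against $\lambda(dy)\,\varphi(da)\,\psi^*_1(db)$, one obtains $\tilde{P}_2(dy|x_2,\varphi,\psi^*_1) \le \kappa e^{\bar{c}}\, \tilde{P}_2(dy|x_1,\varphi,\psi^*_1)$. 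Dividing by the normalizing constants and using $\tilde{c}_2(x_1,\varphi,\psi^*_1)/\tilde{c}_2(x_2,\varphi,\psi^*_1) \le e^{\bar{c}}$ (which follows from the universal sandwich $1 \le \tilde{c}_2 \le e^{\bar{c}}$) then picks up the second factor of $e^{\bar{c}}$, producing the claimed domination with constant $\kappa e^{2\bar{c}}$.

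Once the span bound is in hand, the global contraction claim is immediate: $T$ maps $\mathbb{B}_L(X)$ into itself, so Proposition \ref{theorem 5.4} applied with $M = L$ furnishes a single constant $\alpha(L) \in (0,1)$ for which $\|Tv_1 - Tv_2\|_{sp} \le \alpha(L)\|v_1 - v_2\|_{sp}$ on all of $\mathbb{B}_L(X)$. The main technical obstacle, as always in span-contraction arguments of this kind, is the pointwise measure comparison for $\hat{P}_2$; the right order of operations is to apply Assumption \ref{assumption1.5} at the level of the densities $h$, absorb the $c_2$-exponentials into $e^{\bar{c}}$, and only then normalize, since the dependence of $\tilde{c}_2$ on $x$ would otherwise block a clean pointwise estimate on $\hat{P}_2$.
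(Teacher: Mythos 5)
Your proposal is correct and follows essentially the same route as the paper: bound $Tv(x_2)-Tv(x_1)$ by the $\hat{c}_2$-difference (at most $\bar{c}$) plus the logarithm of the ratio of the $\hat{P}_2$-integrals, control that ratio by $\kappa e^{2\bar{c}}$ via Assumption \ref{assumption1.5} on the densities together with the bounds $0\le c_2\le\bar{c}$ and $1\le\tilde{c}_2\le e^{\bar{c}}$, and then invoke Proposition \ref{theorem 5.4} with $M=L$ for the global contraction. The only cosmetic differences are that you use a minimizing selector at $x_1$ where the paper takes a supremum over $\psi$ of the difference, and that you package the density estimate as a measure-domination inequality rather than manipulating the ratio of integrals directly.
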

\begin{proof}  Notice that for a $v\in \mathbb{B}(X)$ we have 
	\begin{equation}\label{global contraction equation}
		Tv(x)-Tv(x')\leq \sup_{ \psi \in \mathcal{P}(B)}\Bigg[\hat{c}_2(x, \varphi, \psi)-\hat{c}_2(x', \varphi, \psi)+ln \frac{\int_X e^{v(y)}\hat{P}_2(dy|x, \varphi, \psi)}{\int_X e^{v(y)}\hat{P}_2(dy|x', \varphi, \psi)} \Bigg].
	\end{equation}
	
	Now we get
	\begin{equation*}
		\begin{aligned}
			&ln\frac{\int_X e^{v(y)}\hat{P}_2(dy|x, \varphi, \psi)}{\int_X e^{v(y)}\hat{P}_2(dy|x', \varphi, \psi)}\\
			&=ln\frac{\int_X e^{v(y)}\int_{B}\int_{A} e^{c_{2}(x,a,b)}{\frac{h(x,a,b,y)}{h(x',a,b,y)}}h(x',a,b,y)\varphi(da)\psi(db) \lambda(dy)}{\int_X e^{v(y)}\int_{B}\int_{A} e^{c_{2}(x',a,b)}h(x',a,b,y)\varphi(da)\psi(db) \lambda(dy)}. \frac{\tilde{c}_2(x', \varphi, \psi)}{\tilde{c}_2(x, \varphi, \psi)}\\
			&\leq ln\kappa+ln\frac{\tilde{c}_2(x', \varphi, \psi)}{\tilde{c}_2(x, \varphi, \psi)}+ln\frac{\int_X e^{v(y)}\int_{B}\int_{A} e^{c_{2}(x,a,b)}h(x',a,b,y)\varphi(da)\psi(db) \lambda(dy)}{\int_X e^{v(y)}\int_{B}\int_{A} e^{c_{2}(x',a,b)}h(x',a,b,y)\varphi(da)\psi(db) \lambda(dy)}\\
			&\leq  ln\kappa+lne^{\bar{c}}+ln\frac{e^{\bar{c}}\int_X e^{v(y)}\int_{B}\int_{A} h(x',a,b,y)\varphi(da)\psi(db) \lambda(dy)}{\int_X e^{v(y)}\int_{B}\int_{A} h(x',a,b,y)\varphi(da)\psi(db) \lambda(dy)}\\
			=&ln\kappa+2\bar{c} .
		\end{aligned}
	\end{equation*}
	
	In the above expression the first equality follows from Assumption \ref{assumption1.4}, first inequality follows from Assumption \ref{assumption1.5} and the last inequality  follows from \eqref{c} and the fact that $0\leq c_2\leq \bar{c}$. So, from \eqref{global contraction equation} we have 
	\begin{equation}\label{line263}
		Tv(x)-Tv(x')\leq ln\kappa+3\bar{c}.
	\end{equation}
	
	From \eqref{line263} it follows that $\|Tv\|_{sp}\leq L$. Hence $T$ is a global contraction in the span norm on $ \mathbb{B}_L(X).$	
\end{proof}	
Before proceeding to the main theorem of this section, we briefly outline some main points. Suppose player 2 announces that he/she is going to employ a strategy $\Psi \in S_2$. In such a scenario, player 1 attempts to minimize
\begin{equation*}
	J^{\pi^1,\Psi}_{1}(x)=\limsup_{n\to \infty}\frac{1}{n}lnE^{\pi^1,\Psi}_x \bigg[e^{\sum_{t=0}^{n-1}c_i(x_t,a_t,b_t)}\bigg]
\end{equation*}
over $\pi^1 \in \Pi_1$. Thus for player 1 it is a discrete-time Markov decision problem with risk sensitive ergodic cost. Player 2 go through equivalent situations when player 1 announces his strategy to be $\Phi \in S_1$. 
%
This leads us to the following theorem.

\begin{thm}\label{theoremmain1}
	Suppose Assumptions \ref{assumption1}, \ref{assumption2}, \ref{assumption1.4} and \ref{assumption1.5} are satisfied.Then for $\Phi \in S_1$, there exists a unique solution pair  $(\rho_2^*,v_2^*)\in \mathbb{R_+}\times  \mathbb{B}_L(X) $ 	with $v_2^*(x_0)=0$, satisfying
	\begin{equation}\label{main equation1}
		e^{v(x)+\rho}= {\inf_{ \psi \in \mathcal{P}(B)} \int_B\int_A e^{c_2(x,a,b)}\int_X e^{v(y)}P(dy|x,a,b)\Phi(x) (da)  \psi (db)}.
	\end{equation}
	In addition, a strategy $\Psi^*\in S_2$ is an optimal strategy of player 2 given player 1 chooses $\Phi$ if and only if \eqref{main equation1} attains point-wise minimum at $\Psi^*$. Moreover, 
	\begin{equation}\label{rho representation1}
		\rho^*_2=\inf_{\pi^2\in \Pi_2}\limsup_{n\to \infty}\frac{1}{n}lnE^{\Phi,\pi^2}_x \bigg[e^{\sum_{t=0}^{n-1}c_2(x_t,a_t,b_t)}\bigg].
	\end{equation} 
	\hspace{3cm}$\bigg(:=\rho^{*\Phi}_2=\inf_{\pi^2\in \Pi_2}J^{\Phi,\pi^2}_2\bigg) $
\end{thm}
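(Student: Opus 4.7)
My approach is to recast \eqref{main equation1} as an eigenvalue-type problem $T_\Phi v = v + \rho$ for the operator of \eqref{operator1} with $\Phi$ plugged in, apply the contraction results of Proposition \ref{theorem 5.4} and Lemma \ref{global contraction} to produce the pair $(\rho_2^*,v_2^*)$, and finally use a martingale verification argument to obtain \eqref{rho representation1} and the optimality criterion.

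Taking $\ln$ on both sides of \eqref{main equation1} and using the definitions \eqref{Pi tilde}--\eqref{P2}, the equation is equivalent to $v(x)+\rho=T_\Phi v(x)$, where
\begin{equation*}
T_\Phi v(x):=\inf_{\psi\in\mathcal{P}(B)}\Big[\hat c_2(x,\Phi(x),\psi)+\ln\int_X e^{v(y)}\hat P_2(dy|x,\Phi(x),\psi)\Big].
\end{equation*}
This is the operator in \eqref{operator1} with the fixed $\varphi$ replaced by the measurable kernel $x\mapsto\Phi(x)$; the arguments of Proposition \ref{theorem 5.4} and Lemma \ref{global contraction} apply pointwise in $x$, yielding that $T_\Phi:\mathbb{B}(X)\to\mathbb{B}_L(X)$ is a global contraction in the span semi-norm on $\mathbb{B}_L(X)$. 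To extract the constant $\rho_2^*$, I would restrict attention to $V:=\{v\in\mathbb{B}_L(X):v(x_0)=0\}$. On $V$ the span semi-norm is a genuine norm (nonzero constants are excluded) and satisfies $\|v\|_\infty\le\|v\|_{sp}\le L$, which makes $V$ complete. Since $T_\Phi$ commutes with addition of constants, the normalized operator $\hat T_\Phi v:=T_\Phi v-T_\Phi v(x_0)$ maps $V$ into $V$ and remains a contraction in the span norm. Banach's fixed-point theorem gives a unique $v_2^*\in V$ with $\hat T_\Phi v_2^*=v_2^*$; setting $\rho_2^*:=T_\Phi v_2^*(x_0)$ (which is $\ge 0$ since $\hat c_2\ge 0$ and $v_2^*(x_0)=0$) produces $T_\Phi v_2^*=v_2^*+\rho_2^*$, i.e.\ equation \eqref{main equation1}. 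Uniqueness of the normalized pair is immediate, since any other solution with $v(x_0)=0$ is a fixed point of $\hat T_\Phi$.

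For the optimality characterization and \eqref{rho representation1}, I would introduce
\begin{equation*}
M_n:=\exp\Big(\sum_{t=0}^{n-1}c_2(X_t,A_t,B_t)+v_2^*(X_n)-n\rho_2^*\Big).
\end{equation*}
A one-step conditioning argument, using \eqref{main equation1} in the form $e^{v_2^*(X_n)+\rho_2^*}\le E[e^{c_2(X_n,A_n,B_n)+v_2^*(X_{n+1})}\mid\mathcal{F}_n]$ for any $\pi^2$, shows that $\{M_n\}$ is a nonnegative submartingale under every $P_x^{\Phi,\pi^2}$, and a martingale when $\pi^2=\Psi^*$ for any measurable selector $\Psi^*\in S_2$ achieving the pointwise infimum in \eqref{main equation1} (existence of such a selector follows from the continuity in Remark \ref{remark1}, compactness of $\mathcal{P}(B)$, and a standard measurable selection theorem). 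Since $|v_2^*|\le L$, the submartingale inequality $E_x^{\Phi,\pi^2}[M_n]\ge e^{v_2^*(x)}$ yields, after taking logarithms and dividing by $n$, that $J_2^{\Phi,\pi^2}(x)\ge\rho_2^*$ for all $\pi^2\in\Pi_2$, while the martingale identity under $\Psi^*$ gives $J_2^{\Phi,\Psi^*}(x)=\rho_2^*$, establishing \eqref{rho representation1} and the ``if'' direction of the characterization.

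The main obstacle I anticipate is the ``only if'' direction of the optimality characterization: showing that any optimal $\Psi^*\in S_2$ must attain the pointwise infimum in \eqref{main equation1}. If $\Psi^*$ failed to do so on some measurable set $N\subset X$, then $\{M_n\}$ would be a \emph{strict} submartingale whenever $X_n\in N$, and one would need to combine the irreducibility in Assumption \ref{assumption2}(ii) with a quantitative lower bound on the frequency of visits to $N$ in order to turn this local strict increment into a global contradiction $J_2^{\Phi,\Psi^*}(x)>\rho_2^*$. Quantifying this on a general Polish state space is the delicate part of the argument.
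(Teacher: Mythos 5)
Your proposal matches the paper's proof in all essentials: the paper likewise rewrites \eqref{main equation1} as the fixed-point equation $v+\rho=Tv$ with $\varphi=\Phi(x)$, invokes Lemma \ref{global contraction} (together with Proposition \ref{theorem 5.4}) to obtain a span fixed point in $\mathbb{B}_L(X)$ unique up to an additive constant, normalizes at $x_0$ exactly as you do, and for the optimality characterization and \eqref{rho representation1} simply cites \cite[Theorem 2.1]{Hernandez1996risk} — whose argument is the multiplicative (sub)martingale verification you sketch, including the delicate ``only if'' direction that you flag and that the paper does not spell out either. One minor caveat: your parenthetical justification that $\rho_2^*=T_\Phi v_2^*(x_0)\ge 0$ because $\hat c_2\ge 0$ and $v_2^*(x_0)=0$ is incomplete, since the term $\ln\int_X e^{v_2^*(y)}\hat P_2(dy|x_0,\Phi(x_0),\psi)$ can be negative when $v_2^*$ takes negative values; nonnegativity instead follows from the representation \eqref{rho representation1} together with $c_2\ge 0$.
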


\begin{proof} Notice that \eqref{main equation1} can be rewritten as 
	\begin{equation}\label{transformed equation}
		v(x)+\rho=\inf_{ \psi \in \mathcal{P}(B)}\bigg[\hat{c}_2(x,\Phi(x), \psi)+ln \int_X e^{v(y)}\hat{P}_2(dy|x,\Phi(x), \psi)\bigg].
	\end{equation}

	By Lemma \ref{global contraction},  $T$ is a global contraction in the span norm in $ \mathbb{B}_L(X)$, so that it has a fixed point $\hat{v}_2$ in $ \mathbb{B}_L(X)$ and $\hat{v}_2$ (which is unique up to an additive constant) and the constant $\rho_2^*=T\hat{v}_2-\hat{v}_2$ are solutions to \eqref{transformed equation} and consequently to \eqref{main equation1}. 
	
	Let $v^*_2(x)=\hat{v}_2(x)-\hat{v}_2(x_0)$. Then $v^*_2(x_0)=0$
	and it can be easily seen that $(\rho_2^*,v^*_2)$ satisfies \eqref{transformed equation}. Since, $v^{*}_2(x)=\hat{v}_2(x)-\hat{v}_2(x_0)$
	and $\|\hat{v}_2\|_{sp}\leq L$, so $\|v^{*}_2\|_{sp}\leq L$ as well.
	
	Let $(\rho',v')$ be another solution of \eqref{main equation1} i.e., it satisfies $\rho'+v'=Tv'(x)$ with $v'(x_0)=0$. Then clearly $v'$ is also a span fixed point of $T$. Hence $v^*_2(x)-v'(x)=constant.$ Since $v^*_2(x_0)-v'(x_0)=0$, it follows that $v^*_2\equiv v'$. It then easily follows that $\rho^*_2=\rho'$.
	
	The proof of the remaining part is analogous to the proof  in \cite[Theorem 2.1]{Hernandez1996risk} which has been done for countable state space but can easily be extended to our general state space case.
\end{proof}	
\vspace{.5cm}

For a fixed $  \psi \in \mathcal{P}(B),$  and  $v\in \mathbb{B}(X)$  define the operator:
\begin{equation}\label{operator2}
	Uv(x)=\inf_{ \varphi \in \mathcal{P}(A)}\bigg[\hat{c}_1(x, \varphi, \psi)+ln \int_X e^{v(y)}\hat{P}_1(dy|x, \varphi, \psi)\bigg].
\end{equation}

By similar arguments we can also show that $U$ is a global contraction in the span norm in $ \mathbb{B}_L(X)$ and the following theorem holds true.

\begin{thm}\label{theoremmain2}
	Suppose  Assumptions \ref{assumption1}, \ref{assumption2},  \ref{assumption1.4} and \ref{assumption1.5} are satisfied. Then	for $\Psi \in S_2$, there exists a unique solution pair  $(\rho^{*}_1,v_1^{*})\in \mathbb{R_+}\times  \mathbb{B}_L(X) $ with $v_2^*(x_0)=0$ (where $x_0$ is some fixed state), satisfying
	\begin{equation}\label{main equation2}
		e^{v(x)+\rho}= {\inf_{ \varphi \in \mathcal{P}(A)} \int_B\int_A e^{c_1(x,a,b)}\int_X e^{v(y)}P(dy|x,a,b) \varphi (da) \Psi(x) (db)}.
	\end{equation} In addition, a strategy $\Phi^*\in S_1$ is an optimal strategy of player 1 given player 2 chooses $\Psi$ if and only if \eqref{main equation2} attains point-wise minimum at $\Phi^*$. Moreover, 
	\begin{equation}\label{rho representation2}
		\rho^*_1=\inf_{\pi^1\in \Pi_1}\limsup_{n\to \infty}\frac{1}{n}lnE^{\pi^1,\Psi}_x \big[e^{\sum_{t=0}^{n-1}c_1(x_t,a_t,b_t)}\big].
	\end{equation} 
	\hspace{3cm}$\bigg(:=\rho^{*\Psi}_1=\inf_{\pi^1\in \Pi_1}J^{\pi^1,\Psi}_1\bigg) $
\end{thm}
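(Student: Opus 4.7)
The plan is to carry out the proof of Theorem \ref{theoremmain2} by mirroring the argument used for Theorem \ref{theoremmain1}, replacing the roles of $\varphi$ and $\psi$ throughout. First I would rewrite \eqref{main equation2} in the equivalent additive form
\begin{equation*}
v(x)+\rho=\inf_{\varphi\in\mathcal{P}(A)}\Bigl[\hat{c}_1(x,\varphi,\Psi(x))+\ln\int_X e^{v(y)}\hat{P}_1(dy|x,\varphi,\Psi(x))\Bigr],
\end{equation*}
using the definitions \eqref{Pi tilde}--\eqref{P2}, so that a solution of \eqref{main equation2} is exactly a pair $(\rho,v)$ with $U\,v=v+\rho$ for the operator $U$ in \eqref{operator2} (with $\psi=\Psi(x)$).

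Next I would establish the analogues of Proposition \ref{theorem 5.4} and Lemma \ref{global contraction} for $U$. The arguments go through verbatim: the span-contraction estimate uses only the dual representation \cite[Lemma 3.3]{Hernandez1996risk}, the Hahn--Jordan decomposition of the difference of the $\mu$-kernels, and Assumption \ref{assumption2}(i); the roles of $\varphi$ and $\psi$ are symmetric in these steps. Similarly, the bound $\|Uv\|_{sp}\le L=\ln\kappa+3\bar{c}$ uses only Assumptions \ref{assumption1.4} and \ref{assumption1.5} and the bound $0\le c_1\le \bar{c}$, so the same calculation with $c_1,\hat{P}_1$ in place of $c_2,\hat{P}_2$ yields that $U$ maps $\mathbb{B}(X)$ into $\mathbb{B}_L(X)$ and is a global span contraction there.

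From the span contraction I would extract a span-fixed point $\hat{v}_1\in\mathbb{B}_L(X)$, unique up to additive constants, and set $\rho^*_1=U\hat{v}_1-\hat{v}_1$ and $v_1^*(x)=\hat{v}_1(x)-\hat{v}_1(x_0)$. Then $(\rho^*_1,v_1^*)$ solves \eqref{main equation2} with $v_1^*(x_0)=0$ and $\|v_1^*\|_{sp}\le L$. Uniqueness is immediate: any other normalized solution $(\rho',v')$ is also a span-fixed point of $U$, so $v_1^*-v'$ is a constant that vanishes at $x_0$, forcing $v'=v_1^*$ and hence $\rho'=\rho^*_1$. Nonnegativity of $\rho^*_1$ follows from $c_1\ge 0$ applied in \eqref{main equation2}.

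For the last two assertions (characterization of the optimal response $\Phi^*$ and the representation \eqref{rho representation2}), I would invoke the same reduction used for Theorem \ref{theoremmain1}: with $\Psi$ fixed, player 1's problem is a one-controller risk-sensitive ergodic MDP with transition kernel $(x,a)\mapsto\int_B P(\cdot|x,a,b)\Psi(x)(db)$ and bounded cost $(x,a)\mapsto\int_B c_1(x,a,b)\Psi(x)(db)$. The existence of a measurable minimizing selector $\Phi^*\in S_1$ in \eqref{main equation2} follows from Assumption \ref{assumption1} together with a standard measurable-selection argument, and the representation of $\rho^*_1$ as the infimum over $\pi^1\in\Pi_1$ of the risk-sensitive ergodic cost, together with the characterization of optimality via pointwise minimization, is then an essentially verbatim extension of \cite[Theorem 2.1]{Hernandez1996risk} to the Polish state space setting, the key inputs being the boundedness of $v_1^*$, Assumption \ref{assumption2}, and the fact that the resulting controlled chain is ergodic uniformly in the strategy. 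The main obstacle I anticipate is the verification of the measurable-selection step and the extension of the Hern\'andez--Hern\'andez--Marcus verification argument from the countable to the Polish setting, but both are routine under Assumption \ref{assumption1} and the span-norm framework already set up.
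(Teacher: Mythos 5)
Your proposal is correct and follows essentially the same route as the paper, which proves this theorem simply by noting that the arguments for the operator $T$ (Proposition \ref{theorem 5.4}, Lemma \ref{global contraction}, and the proof of Theorem \ref{theoremmain1}) carry over to $U$ with the roles of $\varphi$ and $\psi$ interchanged, and then appeals to the extension of \cite[Theorem 2.1]{Hernandez1996risk} for the verification part. Your write-up in fact spells out the symmetric argument in more detail than the paper does.
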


	\section{Existence of Nash equilibrium}

In this section we establish the existence of a pair of stationary equilibrium strategies for a nonzero-sum game. To this end we first outline a standard procedure for establishing the existence
of a Nash equilibrium. From Theorem \ref{theoremmain2} it follows that given that player 2 is using the strategy $\Psi \in S_2$, we can find  an optimal response $\Phi^* \in S_1$ for player 1 . Clearly $\Phi^*$ depends on $\Psi$ and moreover there may be several optimal responses for player 1 in $S_1$. Analogous results holds for player 2 if player 1 announces that he is going to use a strategy $\Phi \in S_1$. Hence given a pair of strategies $\left(\Phi, \Psi\right) \in S_1 \times S_2$, we can find a set of pairs of optimal responses $\left\{\left(\Phi^*, \Psi^*\right) \in S_1 \times S_2\right\}$ via the appropriate pair of optimality equations described above. This defines a set-valued map. Clearly any fixed point of this set-valued map is a Nash equilibrium.

To ensure the existence of a Nash equilibrium, we first take the following separability assumptions.

\begin{assumption}\label{assumption1.11}
	\item[(i)]	There exist two sub stochastic kernels
	\begin{equation*}
		P_1: X \times A \rightarrow \mathcal{P}(X), \quad P_2: X \times B \rightarrow \mathcal{P}(X)
	\end{equation*}
	such that
	\begin{equation*}
		P(\cdot \mid x, a, b)=P_1(\cdot \mid x,a)+P_2(\cdot \mid x, b), \quad x \in X, \quad a \in A, \quad b \in B .
	\end{equation*} Since $P << \lambda$, we have $P_1 << \lambda$ and $P_2 << \lambda$. Let  $h_1$ and $h_2$ be the respective densities. We assume that for each $x,y \in X$, $h_1(x,\cdot,y)$ and $h_2(x,\cdot,y)$ are continuous.
	
	\item[(ii)]\label{assumption4.1.ii} For each $x\in X$,
	\begin{equation*} \int_X \bigg[\sup_a|h_1(x,a,y)|+\sup_b|h_1(x,b,y)|\bigg]\lambda(dy)<\infty\end{equation*}
\end{assumption}
\begin{assumption}\label{assumption1.12}
	The reward functions $c_i, i=1,2$, are separable in action variables, i.e., there exist bounded continuous(in the second variable) functions
	\begin{equation*}
		c_{i 1}: X \times A \rightarrow \mathbb{R}, \quad c_{i 2}: X \times B \rightarrow \mathbb{R}, \quad i=1,2,
	\end{equation*}
	such that
	\begin{equation*}
		c_i(x, a, b)=c_{i 1}(x, a)+c_{i 2}(x, b), \quad x \in X, \quad a \in A, \quad b \in B .
	\end{equation*}
\end{assumption}

Following \cite{Himmelberg1976existence} and \cite{Parthasarathy1982existence} we topologize the spaces $S_1$ and $S_2$ with the topology of relaxed controls introduced in \cite{Warga1967functions}. We identify two elements $\Phi, \hat{\Phi} \in S_1$ if $\Phi=\hat{\Phi}$ a.e. $ \lambda$ (where $ \lambda$ is as in Assumption \ref{assumption1.4}). Let

$Y_1=\{f: X \times A \rightarrow \mathbb{R} \mid f$ is measurable in the first argument and continuous in the second and there exists $g \in L^1( \lambda)$ such that $|f(x, a)| \leq g(x)$ for every $a \in A\}$.

Then $Y_1$ is a Banach space with norm \cite{Warga1967functions}
\begin{equation*}
	\|f\|_W=\int_X \sup _a|f(x, a)|  \lambda(d x) .
\end{equation*}

Every $\Phi \in S_1$ (with the $ \lambda$-a.e. equivalence relation) can be identified with the element $\Lambda_{\Phi} \in Y_1^*$ (the dual of $Y_1$) defined as
\begin{equation*}
	\Lambda_{\Phi}(f)=\int_X \int_A f(x, a) \Phi(x)(d a)  \lambda(d x) .
\end{equation*}

Thus $S_1$ can be identified with a subset of $Y_1^*$. Equip $S_1$ with the weak-star topology. Then it can be shown as in \cite{Parthasarathy1982existence} that $S_1$ is compact and metrizable. $S_2$ can be topologized analogously.

Next, we present the following lemmas, which play a pivotal role to show  upper semi-continuity of a specific set-valued map which we have mentioned earlier.

\begin{lem}\label{convergence of r}
	Let, $\Phi_m \to {\Phi}\in S_1 ~\text{and}~ \Psi_m \to \Psi\in S_2$ in the weak star topology. Then under Assumption \ref{assumption1.12} for $i=1,2$, $\hat{c}_i(x,\Phi_{m},\Psi_{m}) \to \hat{c}_i(x,{\Phi},{\Psi})$ as $m\to\infty$.
\end{lem}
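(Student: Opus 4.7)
The key ingredient is the separability of the cost (Assumption \ref{assumption1.12}), which decouples $\varphi$ and $\psi$ in the exponential. My plan is to first reduce the claim to convergence of two independent marginals, and then establish those using the weak-star convergence of $\Phi_m$ and $\Psi_m$.

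\textbf{Reduction via separability.} Writing $e^{c_i(x,a,b)} = e^{c_{i1}(x,a)}\, e^{c_{i2}(x,b)}$ and applying Fubini, one gets
\begin{equation*}
\tilde c_i(x, \varphi, \psi) \;=\; F_i(x, \varphi)\, G_i(x, \psi),
\end{equation*}
where $F_i(x, \varphi) := \int_A e^{c_{i1}(x,a)}\, \varphi(da)$ and $G_i(x, \psi) := \int_B e^{c_{i2}(x,b)}\, \psi(db)$. Since $1 \le \tilde c_i \le e^{\bar c}$, the function $\ln$ is Lipschitz on the range of $\tilde c_i$, so it suffices to prove $F_i(x, \Phi_m) \to F_i(x, \Phi)$ and $G_i(x, \Psi_m) \to G_i(x, \Psi)$; adding the logarithms then yields the desired conclusion for $\hat c_i$.

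\textbf{Convergence of the marginals.} For the $A$-factor: by Assumption \ref{assumption1.12}, $a \mapsto e^{c_{i1}(x, a)}$ is bounded and continuous on the compact space $A$, so $\varphi \mapsto F_i(x, \varphi)$ is continuous on $\mathcal{P}(A)$ for the weak topology. The weak-star convergence $\Phi_m \to \Phi$ in $S_1 \subset Y_1^*$ is, by construction, convergence against Carath\'eodory integrands in $Y_1$; in particular, testing against $f(x', a) = g(x')\, e^{c_{i1}(x, a)}$ (with the chosen $x$ fixed and $g$ ranging over $L^1(\lambda)$) yields weak $L^1(\lambda)$ convergence of $x' \mapsto F_i(x', \Phi_m)$ to $x' \mapsto F_i(x', \Phi)$. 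A subsequence/metrizability argument in the style of \cite{Parthasarathy1982existence}, combined with the compactness of $S_1$, then upgrades this to the pointwise-in-$x$ convergence needed at the given $x$. The $B$-factor is handled identically.

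\textbf{Main obstacle.} The crux of the proof is moving from weak-star convergence in $Y_1^*$, which is a global, $L^1$-averaged statement, to the pointwise-in-$x$ assertion $\int_A e^{c_{i1}(x, a)} \Phi_m(x)(da) \to \int_A e^{c_{i1}(x, a)} \Phi(x)(da)$ required to invoke the continuity of $F_i(x, \cdot)$. This step is not automatic from the definition of the topology on $S_1$ and appears to rely on the Young-measure results from \cite{Himmelberg1976existence, Warga1967functions}, together with Assumption \ref{assumption1.4} to ensure that the equivalence relation modulo $\lambda$-null sets does not interfere with the pointwise limit.
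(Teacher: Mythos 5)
Your reduction via separability (factorizing $\tilde c_i(x,\varphi,\psi)=F_i(x,\varphi)\,G_i(x,\psi)$ and splitting the logarithm, using that $\ln$ is Lipschitz on $[1,e^{\bar c}]$) is exactly the paper's first step. The problem is the second half. The step you yourself flag as the ``main obstacle'' --- passing from weak-star convergence in $Y_1^*$ to the pointwise-in-$x$ statement $\int_A e^{c_{i1}(x,a)}\Phi_m(x)(da)\to\int_A e^{c_{i1}(x,a)}\Phi(x)(da)$ --- is not merely delicate; the fix you sketch does not work. Testing $\Lambda_{\Phi_m}$ against $f(x',a)=g(x')e^{c_{i1}(x,a)}$ gives only weak ($\lambda$-averaged) convergence of $x'\mapsto F_i(x,\Phi_m(x'))$, and no subsequence, metrizability, or compactness argument upgrades weak $L^1$ (or weak-star $L^\infty$) convergence to pointwise or even $\lambda$-a.e.\ convergence: let $\Phi_m(x)$ alternate between two Dirac measures $\delta_{a_1},\delta_{a_2}$ on rapidly oscillating sets (a Riemann--Lebesgue type example); then $\Phi_m\to\frac{1}{2}(\delta_{a_1}+\delta_{a_2})$ in the weak-star topology while $F_i(x,\Phi_m(x))$ converges pointwise nowhere. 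Worse, since elements of $S_1$ are only identified up to $\lambda$-a.e.\ equality, a conclusion at one fixed $x$ cannot be a consequence of the topology at all.

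For comparison, the paper's own proof does not actually attempt a pointwise argument: after the same separability reduction it inserts the factor $\int_X 1\cdot\lambda(dx)$, thereby silently replacing $\ln\int_A e^{c_{i1}(x,a)}\Phi_m(x)(da)$ by $\ln\Lambda_{\Phi_m}\big(e^{c_{i1}}\big)$, and then invokes weak-star convergence; what this establishes is convergence of the $\lambda$-integrated functionals, which is consistent with how the lemma is used later (all downstream conclusions are stated $\lambda$-a.e.). So your diagnosis that the pointwise claim ``is not automatic from the definition of the topology'' is correct, but the honest resolution is to weaken the conclusion to the integrated or $\lambda$-a.e.\ form (equivalently, to test against arbitrary $g\in L^1(\lambda)$), not to assert a pointwise upgrade. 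As written, your proof has a genuine gap at its central step.
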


\begin{proof} We have for $i=1,2$,
	\begin{equation*}
		\begin{aligned}
			&\hat{c}_i(x,\Phi_m,\Psi_m)=ln\tilde{c}_i(x,\Phi_m,\Psi_m)\\
			&=ln\int_B\int_A e^{c_i(x,a,b)}\Phi_{m}(x) (da) \Psi_{m}(x) (db)\\
			&=ln\int_A e^{c_{i1}(x,a)}\Phi_{m}(x) (da)+ln\int_Be^{c_{i2}(x,b)} \Psi_{m}(x) (db)\\
			&=ln\int_X 1. \lambda(dx)\int_A e^{c_{i1}(x,a)}\Phi_{m}(x) (da)+ln\int_X 1. \lambda(dx)\int_Be^{c_{i2}(x,b)} \Psi_{m}(x) (db)\\
			&=ln\int_X \int_A \bigg(1.e^{c_{i1}(x,a)}\bigg)\Phi_{m}(x) (da) \lambda(dx)+ln \int_X\int_B\bigg(1.e^{c_{i2}(x,b)}\bigg) \Psi_{m}(x) (db) \lambda(dx).
		\end{aligned}
	\end{equation*}
	
	Now by  Assumption \ref{assumption1.12} and since  $\Phi_{m} \to \Phi ~\text{and}~ \Psi_{m} \to \Psi$ in the weak star topology, the result is immediate.
\end{proof}
\begin{lem}\label{covergence}
	Suppose Assumptions \ref{assumption1}, \ref{assumption1.4}, \ref{assumption1.11} and \ref{assumption1.12} hold. Let $\{v_m\}$ be a uniformly bounded sequence in $\mathbb{B}(X)$ and  $v\in \mathbb{B}(X)$ be a weak star limit point of $\{v_m\}$. If $\Phi_{m} \to {\Phi}\in S_1 ~\text{and}~ \Psi_m \to \Psi\in S_2$ in the weak star topology, then for each $x\in X$ and $i=1,2$,
	\begin{equation*}
		\int_{X}{v_m(y)}\hat{P_i}(dy|x,\Phi_{m},\Psi_m)\to \int_{X}{v(y)}\hat{P_i}(dy|x,\Phi,\Psi)~~~~\text{as}~~~~ m \rightarrow \infty.
	\end{equation*}
\end{lem}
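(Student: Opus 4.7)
My plan is to split the integral into a numerator and denominator using $\hat{P}_i=\tilde{P}_i/\tilde{c}_i$, observe that the denominator is handled by Lemma \ref{convergence of r}, and reduce the numerator (via the separability assumptions) to integrals of the form $\int_A e^{c_{i1}(x,a)}\phi_m(a)\Phi_m(x)(da)$, where $\phi_m$ is an integral of $v_m$ against the kernel density. The key then is to upgrade the pointwise convergence $\phi_m\to\phi$ coming from weak-star convergence of $v_m$ to uniform convergence on the compact action space.

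Concretely, writing
\begin{equation*}
\int_{X}v_m(y)\hat{P}_i(dy|x,\Phi_m,\Psi_m)=\frac{\int_{X}v_m(y)\tilde{P}_i(dy|x,\Phi_m,\Psi_m)}{\tilde{c}_i(x,\Phi_m,\Psi_m)},
\end{equation*}
the denominator converges to $\tilde{c}_i(x,\Phi,\Psi)$ by Lemma \ref{convergence of r} together with continuity of $\exp$, and is bounded below by $1$. By Assumptions \ref{assumption1.11}(i) and \ref{assumption1.12}, the $\lambda$-density of $\tilde{P}_i(\cdot|x,a,b)$ equals $e^{c_{i1}(x,a)+c_{i2}(x,b)}\bigl\{h_1(x,a,y)+h_2(x,b,y)\bigr\}$, so integrating against $\Phi_m(x)\otimes\Psi_m(x)$ and applying Fubini, the numerator splits as
\begin{equation*}
\Bigl[\int_{B}e^{c_{i2}(x,b)}\Psi_m(x)(db)\Bigr]\int_{A}e^{c_{i1}(x,a)}\phi_m^{(1)}(a)\Phi_m(x)(da)
\end{equation*}
plus the symmetric term, where $\phi_m^{(1)}(a):=\int_{X}v_m(y)h_1(x,a,y)\lambda(dy)$ and analogously $\phi_m^{(2)}(b):=\int_{X}v_m(y)h_2(x,b,y)\lambda(dy)$. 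The scalar prefactors converge by (the argument in) Lemma \ref{convergence of r}.

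I then show $\phi_m^{(1)}\to\phi^{(1)}(a):=\int_{X}v(y)h_1(x,a,y)\lambda(dy)$ uniformly on $A$ (the case of $\phi_m^{(2)}$ on $B$ is symmetric). Pointwise convergence is immediate from weak-star convergence of $v_m$ together with $h_1(x,a,\cdot)\in L^{1}(\lambda)$, which is ensured by Assumption \ref{assumption1.11}(ii). Equicontinuity of $\{\phi_m^{(1)}\}$ follows from continuity of $h_1(x,\cdot,y)$ in the action argument (Assumption \ref{assumption1.11}(i)) combined with the $\lambda$-integrable envelope of Assumption \ref{assumption1.11}(ii) and dominated convergence; the uniform bound $\sup_m\|v_m\|_\infty<\infty$ ensures the modulus of continuity is independent of $m$. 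Arzel\`a--Ascoli then upgrades pointwise to uniform convergence. Uniform convergence lets me replace $\phi_m^{(1)}$ by $\phi^{(1)}$ in the inner integral up to vanishing error, after which the integral against $\Phi_m(x)$ converges by the same continuity-in-$a$ argument that drives Lemma \ref{convergence of r}, applied now to the continuous bounded test function $e^{c_{i1}(x,\cdot)}\phi^{(1)}(\cdot)$ on $A$. Combining with the convergent scalar prefactors, and treating the symmetric term identically, the numerator converges to its limit and the lemma follows.

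The hard part will be the equicontinuity step. Without both halves of Assumption \ref{assumption1.11}, namely continuity of $h_1(x,\cdot,y)$ in the action variable and a $\lambda$-integrable pointwise envelope for $\sup_{a}|h_1(x,a,y)|$, one cannot upgrade pointwise convergence of $\phi_m^{(1)}$ to uniform convergence on $A$; and pointwise convergence alone would not suffice to pass to the limit against the varying relaxed-controls measures $\Phi_m(x)$.
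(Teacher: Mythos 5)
Your proof is correct and follows essentially the same route as the paper: split off the normalizing denominator via Lemma \ref{convergence of r}, use the separability of $P$ and $c_i$ (Assumptions \ref{assumption1.11} and \ref{assumption1.12}) to reduce the numerator to action-space integrals, and upgrade the pointwise convergence coming from weak-star convergence of $\{v_m\}$ to uniform convergence over the compact action space using the integrable envelope of Assumption \ref{assumption1.11}(ii). The only cosmetic difference is that you obtain the uniformity via equicontinuity and Arzel\`a--Ascoli, whereas the paper extracts a maximizing sequence $(a_m,b_m)$ and passes to a convergent subsequence; the two devices are interchangeable here.
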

\begin{proof} Note that 
	\begin{equation}\label{line364}
		\begin{aligned}
			&\Bigg|\int_{X}{v_m(y)}\hat{P_i}(dy|x,\Phi_{m}(x),\Psi_{m}(x))-\int_{X}{v(y)}\hat{P_i}(dy|x,\Phi(x),\Psi(x))\Bigg|\\
			&\leq\Bigg|\int_{X}{v_m(y)}\hat{P_i}(dy|x,\Phi_{m}(x),\Psi_{m}(x))-\int_{X}{v(y)}\hat{P_i}(dy|x,\Phi_{m}(x),\Psi_{m}(x))\Bigg|\\
			&+\Bigg|\int_{X}{v(y)}\hat{P_i}(dy|x,\Phi_{m}(x),\Psi_{m}(x))-\int_{X}{v(y)}\hat{P_i}(dy|x,\Phi(x),\Psi(x))\Bigg|.\\
		\end{aligned}
	\end{equation}
	We claim that 
	\begin{equation}\label{364'}
		\int_{X}{v(y)}\hat{P_i}(dy|x,\Phi_{m},\Psi_m)\to \int_{X}{v(y)}\hat{P_i}(dy|x,\Phi,\Psi)~~~~\text{as}~~~~ m \rightarrow \infty.
	\end{equation}
	Observe that, under Assumption \ref{assumption1.11}$(i)$, Assumption \ref{assumption1.12} and using \eqref{P2} we get 
	\begin{equation*}
		\begin{aligned}
			&\int_{X}{v(y)}\hat{P_i}(dy|x,\Phi_{m},\Psi_m)\\
			&=\frac{\int_{B}\int_A\int_{X}v(y)e^{c_{i1}(x,a)}e^{c_{i2}(x,b)}P_1(dy|x,a)\Phi_{m}(x)(da)\Psi_{m}(x)(db)}{c_i(x,\Phi_{m}(x),\Psi_{m}(x))}+\\
			&\hspace{3cm}\frac{\int_{B}\int_A\int_{X}v(y)e^{c_{i1}(x,a)}e^{c_{i2}(x,b)}P_2(dy|x,a)\Phi_{m}(x)(da)\Psi_{m}(x)(db)}{c_i(x,\Phi_{m}(x),\Psi_{m}(x))}\\
			&=\frac{\int_A\int_{X}v(y)e^{c_{i1}(x,a)}h_1(x,a,y)\lambda(dy)\Phi_{m}(x)(da)\int_B e^{c_{i2}(x,b)}\Psi_{m}(x)(db)}{c_i(x,\Phi_{m}(x),\Psi_{m}(x))}+\\
			&\hspace{2.5cm}\frac{\int_B\int_{X}v(y)e^{c_{i2}(x,b)}h_2(x,b,y)\lambda(dy)\Psi_{m}(x)(db)\int_A e^{c_{i1}(x,a)}\Phi_{m}(x)(da)}{c_i(x,\Phi_{m}(x),\Psi_{m}(x))}.
		\end{aligned}	
	\end{equation*}
	
	By using Lemma \ref{convergence of r} and since $\Phi_{m} \to {\Phi}\in S_1 ~\text{and}~ \Psi_m \to \Psi\in S_2$, under  Assumption \ref{assumption4.1.ii}$(i)$, \eqref{364'} holds true, i.e. the second term in the right hand side of \eqref{line364} goes to zero as $m \to \infty$. Now we show that the first one also goes to zero.
	
	Again note that,
	\begin{equation*}
		\begin{aligned}
			&\Bigg|\int_{X}{v_m(y)}\hat{P_i}(dy|x,\Phi_{m}(x),\Psi_{m}(x))-\int_{X}{v(y)}\hat{P_i}(dy|x,\Phi_{m}(x),\Psi_{m}(x))\Bigg|\\
			&=\Bigg|\int_{X}\int_B\int_{A}\frac{{v_m(y)}-{v(y)}}{\tilde{c}_i(x,\Phi_{m}(x),\Psi_{m}(x))}e^{c_i(x,a,b)}P(dy|x,a,b)\Phi_{m}(x)(da)\Psi_{m}(x)(db)\Bigg|\\
			&\leq e^{\bar{c}} \int_B\int_{A}\Bigg|\int_{X}{({v_m(y)}-{v(y)})}P(dy|x,a,b)\Bigg|\Phi_{m}(x)(da)\Psi_{m}(x)(db)\\
			&\leq e^{\bar{c}}~\sup_{b\in B}\sup_{a\in A}\Bigg|\int_{X}{({v_m(y)}-{v(y)})}P(dy|x,a,b)\Bigg|
			\\
			&=e^{\bar{c}}~ \sup_{b\in B}\sup_{a\in A}\Bigg|\int_{X}{({v_m(y)}-{v(y)})}h(x,a,b,y) \lambda(d y)\Bigg|,
		\end{aligned}
	\end{equation*} where $h=h_1+h_2$.
	From the compactness of $A, B$ and the continuity of $h(x,.,.,y)$, it follows that for  $ m\in \mathbb{N},$
		{\small\begin{equation*}
			\begin{aligned}
				V_m(x)&:=\Bigg|\int_{X}{({v_m(y)}-{v(y)})}h(x,a_m,b_m,y) \lambda(d y)\Bigg|=\sup_{b\in B}\sup_{a\in A}\Bigg|\int_{X}{({v_m(y)}-{v(y)})}h(x,a,b,y) \lambda(d y)\Bigg| ,
			\end{aligned}
	\end{equation*}} for some sequences $\{a_m\}\in A$ and $\{b_m\}\in B$. We now prove that $V_m(x)\to 0$ as $m\to \infty$. 
	
	Since $A$ and $B$ are compact , without loss of generality, we can assume that
	\begin{equation*}
		a_{m} \rightarrow a_0 \quad \text { and } \quad b_{m} \rightarrow b_0, \quad \text { for some } a_0 \in A \text { and } b_0 \in B.
	\end{equation*}
	Note that, for each $m$, we have
	\begin{equation}\label{1.17}
		\begin{aligned}
			V_{m}(x)  &\leq\left|\int\left({v_m(y)}-{v(y)}\right)\left(h\left(x, a_{m}, b_{m},y\right)-h\left(x, a_0, b_0,y\right)\right)  \lambda(dy)\right|+\\
			&\hspace{5.5cm}
			\left|\int\left({v_m(y)}-{v(y)}\right) h\left(x, a_0, b_0,y\right)  \lambda(dy)\right|.
		\end{aligned}
	\end{equation}
	Moreover,
	\begin{equation*}
		\begin{aligned}
			&\left|\int\left({v_m(y)}-{v(y)}\right)\left(h\left(x, a_{m}, b_{m},y\right)-h\left(x, a_0, b_0,y\right)\right)  \lambda(dy)\right|
			\leq \\
			&\hspace{4.5cm}\left\|{v_m(y)}-{v(y)}\right\|\left\|h\left(x,  a_{m}, b_{m},\cdot\right)-h\left(x,  a_0, b_0,\cdot\right)\right\|_{L^{1}( \lambda)}.
		\end{aligned}
	\end{equation*}
	
	By Assumption \ref{assumption1.11}$(ii)$
	and by the boundedness of $\left\{\left\|{v_m(y)}-{v(y)}\right\|\right\}$, from the last inequality we get that, first term on the right-hand side of \eqref{1.17} goes to zero as $m \rightarrow \infty$. Since, $v$ is a weak star limit of $\{v_{m}\}$ and $h\left(x, a_0, b_0,\cdot\right) \in L_1( \lambda)$, so the second term on the right-hand side of \eqref{1.17} also goes to zero as $m \rightarrow \infty$. Thus, we have shown that $V_{m}(x) \rightarrow 0$ as $m \rightarrow \infty$. Hence  the result followed. \end{proof}

\begin{lem}\label{boundedness}
	Let for $M>0$, $\{v_m\}$ be any sequence in $\mathbb{B}_M(X)$ with $v_m(x_0)=0$ for all $m\in \mathbb{N}$. Then  $\{{v_m}\}$ is  uniformly bounded .
\end{lem}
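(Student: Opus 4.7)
The plan is to exploit the span‐norm bound together with the normalization $v_m(x_0)=0$ to pin the graph of each $v_m$ inside an interval of length $M$ that contains $0$. Once both endpoints of this interval are controlled, the supremum norm bound follows directly.

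More precisely, I would first note that, by definition of $\mathbb{B}_M(X)$,
\begin{equation*}
\sup_{x\in X} v_m(x) - \inf_{x\in X} v_m(x) = \|v_m\|_{sp} \leq M
\end{equation*}
for every $m \in \mathbb{N}$. Second, I would use the normalization $v_m(x_0)=0$ to observe that
\begin{equation*}
\inf_{x\in X} v_m(x) \leq v_m(x_0) = 0 \leq \sup_{x\in X} v_m(x).
\end{equation*}
Combining these two observations,
\begin{equation*}
\sup_{x\in X} v_m(x) \leq \inf_{x\in X} v_m(x) + M \leq M,
\qquad
\inf_{x\in X} v_m(x) \geq \sup_{x\in X} v_m(x) - M \geq -M,
\end{equation*}
so $-M \leq v_m(x) \leq M$ for every $x \in X$ and every $m$. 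Hence $\|v_m\| \leq M$ uniformly in $m$, which is precisely the claim.

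There is no real obstacle here; the content of the lemma is essentially the elementary fact that the span semi-norm together with a pointwise normalization controls the supremum norm. The only thing to be careful about is making sure that $x_0$ is the same fixed reference point used throughout Section~3 (the same one appearing in the statements of Theorems~\ref{theoremmain1} and~\ref{theoremmain2}), so that the condition $v_m(x_0)=0$ is a legitimate constraint inherited from the solutions of the optimality equations to which this lemma will be applied.
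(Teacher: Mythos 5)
Your proof is correct and is essentially identical to the paper's own argument: both use $\inf_x v_m(x)\le v_m(x_0)=0\le\sup_x v_m(x)$ together with $\|v_m\|_{sp}\le M$ to conclude $-M\le v_m\le M$. Nothing further is needed.
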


\begin{proof} Now for each fixed $m\in \mathbb{N}$ as, 
	\begin{equation*}\inf_{x\in X} v_m(x)\leq v_m(x_0)=0\end{equation*}
	and $\|v_m(x)\|_{sp}\leq M$, using \eqref{span norm}  we have
	\begin{equation}\label{1}
		\sup_{x\in X} v_m{(x)}\leq M
	\end{equation}
	Again as $\sup_{x\in X} v_m(x)\geq v_m(x_0)$ and $\|v_m(x)\|_{sp}\leq M$, using \eqref{span norm} we have
	\begin{equation}\label{2}
		\inf_{x\in X} v_m{(x)}\geq -M
	\end{equation}	
	Now from, \eqref{1} and \eqref{2} for fixed $m\in \mathbb{N}$ and each $x\in X$ we get
	\begin{equation*}
		-M\leq v_m(x)\leq M
	\end{equation*}
	Therefore, $\{{v_m}\}$ is a uniformly bounded sequence in $\mathbb{B}_M(X)$.
\end{proof}

For a fix $\Phi \in S_1$ let, 
\begin{equation*}
	\begin{aligned}
		&H(\Phi)=\bigg\{\Psi^* \in S_2:\hat{c}_2(x,\Phi(x),\Psi^*(x))+ln \int_X e^{v^{*\Phi}_2(y)}\hat{P}_2(dy|x,\Phi(x),\Psi^*(x))\\ & \hspace{3.5cm}=\inf_{\psi \in \mathcal{P}(B)}\bigg[\hat{c}_2(x,\Phi(x),\psi)+ln \int_X e^{v^{*\Phi}_2(y)}\hat{P}_2(dy|x,\Phi(x),\psi)\bigg]\bigg\},
	\end{aligned}
\end{equation*}
where ${v^{*\Phi}_2}$ is the unique solution of \eqref{main equation1} corresponding to the strategy  $\Phi \in S_1$.

Similarly  	for a fix $\Psi \in S_2$ let, 
\begin{equation*}
	\begin{aligned}
		&H(\Psi)=\bigg\{\Phi^* \in S_1:\hat{c}_1(x,\Phi^*(x),\Psi(x))+ln \int_X e^{v^{*\Psi}_1(y)}\hat{P}_1(dy|x,\Phi^*(x),\Psi(x))\\ & \hspace{3.5cm}=\inf_{\varphi \in \mathcal{P}(A)}\bigg[\hat{c}_1(x,\varphi,\Psi(x))+ln \int_X e^{v^{*\Psi}_1(y)}\hat{P}_2(dy|x,\varphi,\Psi(x))\bigg]\bigg\},
	\end{aligned}
\end{equation*}where ${v^{*\Psi}_1}$ is the unique solution of \eqref{main equation2} corresponding to the strategy  $\Psi \in S_2$. 
\begin{rem}\label{remark2}
	Since exponential and logarithmic functions are increasing functions, so $H(\Phi)$ and $H(\Psi)$ also  have the following expressions:
	\begin{equation*}
		\begin{aligned}
			&H(\Phi)=\bigg\{\Psi^* \in S_2: \int_B\int_A e^{c_2(x,a,b)}\int_X e^{v^{*\Phi}_2(y)}P(dy|x,a,b)\Phi(x) (da)  \Psi^*(x) (db)\\ &\hspace{3cm}= \inf_{ \psi \in \mathcal{P}(B)} \int_B\int_A e^{c_2(x,a,b)}\int_X e^{v^{*\Phi}_2(y)}P(dy|x,a,b)\Phi(x) (da)  \psi (db)\bigg\}.
		\end{aligned}
	\end{equation*}
	
	\begin{equation*}
		\begin{aligned}
			&H(\Psi)=\bigg\{\Phi^* \in S_1: \int_B\int_A e^{c_1(x,a,b)}\int_X e^{v^{*\Psi}_1(y)}P(dy|x,a,b)\Phi^*(x) (da)  \Psi(x) (db)\\ &\hspace{3cm}= \inf_{ \varphi \in \mathcal{P}(A)} \int_B\int_A e^{c_1(x,a,b)}\int_X e^{v^{*\Psi}_1(y)}P(dy|x,a,b)\varphi (da)  \Psi(x) (db)\bigg\}.
		\end{aligned}
	\end{equation*}
\end{rem} 
\vspace{0.5cm}
Next set 
\begin{equation*}
	H(\Phi,\Psi)=H(\Psi)\times H(\Phi).
\end{equation*}

\begin{lem}\label{nonempty closed convex }
	Under  Assumptions \ref{assumption1}, \ref{assumption1.4},  \ref{assumption1.11} and \ref{assumption1.12}, 	$H(\Phi,\Psi)$ is a non-empty compact convex subset of $S_1\times S_2$. 
\end{lem}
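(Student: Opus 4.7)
The plan is to prove non-emptiness, convexity, and compactness of $H(\Phi) \subset S_2$; the symmetric claim for $H(\Psi) \subset S_1$ follows identically, and the product $H(\Phi,\Psi) = H(\Psi) \times H(\Phi)$ then inherits all three properties.

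First, I would recast the defining condition in the linear form supplied by Remark \ref{remark2}. Setting
\[
K_\Phi(x,b) := \int_A e^{c_2(x,a,b)}\int_X e^{v^{*\Phi}_2(y)}\, P(dy \mid x, a, b)\,\Phi(x)(da),
\]
a strategy $\Psi^*\in S_2$ belongs to $H(\Phi)$ iff $\int_B K_\Phi(x,b)\,\Psi^*(x)(db) = \inf_{b\in B} K_\Phi(x,b)$ for $\lambda$-a.e.\ $x$; the infimum reduces from $\mathcal{P}(B)$ to $B$ because $\psi \mapsto \int_B K_\Phi(x,b)\,\psi(db)$ is linear on $\mathcal{P}(B)$, with extreme points $\{\delta_b : b \in B\}$. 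Using boundedness of $v_2^{*\Phi}$ and $c_2$, Assumption \ref{assumption1}, and $\lambda\in\mathcal{P}(X)$, I would verify that $K_\Phi$ is bounded, measurable in $x$, and continuous in $b$ (by dominated convergence applied to the inner integral and then to the outer integral against $\Phi(x)$), hence lies in $Y_2$.

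Convexity is then immediate from the linearity of the characterization in $\Psi^*(x)$. For non-emptiness, the argmin correspondence $x \mapsto \arg\min_{b\in B} K_\Phi(x,b)$ is non-empty (by continuity of $K_\Phi(x,\cdot)$ and compactness of $B$), compact-valued, and measurable (since $K_\Phi$ is a Carath\'eodory function); the Kuratowski--Ryll-Nardzewski selection theorem yields a measurable $b^*:X\to B$ with $K_\Phi(x, b^*(x)) = \inf_{b\in B} K_\Phi(x,b)$, and $\Psi^*(x):=\delta_{b^*(x)}$ belongs to $H(\Phi)$.

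For compactness, since $S_2$ is itself compact metrizable in the weak-star topology, it suffices to show $H(\Phi)$ is closed. Given $\Psi_m^*\to\Psi^*$ in $S_2$ with $\Psi_m^*\in H(\Phi)$, I would integrate the defining equality against $\lambda$,
\[
\int_X\int_B K_\Phi(x,b)\,\Psi_m^*(x)(db)\,\lambda(dx) \;=\; \int_X \inf_{b\in B} K_\Phi(x,b)\,\lambda(dx),
\]
and pass to the limit as $m\to\infty$; because $K_\Phi\in Y_2$, the defining property of the weak-star topology on $S_2$ forces the left side to converge to the same quantity with $\Psi^*$ in place of $\Psi_m^*$. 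Since $\int_B K_\Phi(x,b)\,\Psi^*(x)(db)\ge\inf_{b\in B} K_\Phi(x,b)$ pointwise, equality of the $\lambda$-integrals forces pointwise equality $\lambda$-a.e., so $\Psi^*\in H(\Phi)$. The main obstacle I anticipate is exactly this limit step: weak-star convergence only provides convergence of $\lambda$-integrals against test functions in $Y_2$, not pointwise convergence of the measures $\Psi_m^*(x)$, so the logarithmic optimality condition does not pass to the limit directly; the linear reformulation from Remark \ref{remark2} is precisely what circumvents this difficulty and makes the comparison with the pointwise infimum possible.
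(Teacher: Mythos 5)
Your proposal is correct, but it reaches the conclusion by a noticeably different route than the paper. The paper obtains non-emptiness by observing that, by Remark \ref{remark1}, the objective is continuous in $\psi$ on the compact set $\mathcal{P}(B)$, so the infimum is attained pointwise (the measurability of the selector is left implicit); it proves closedness by extracting a convergent subsequence $\Psi_m^*\to\hat\Psi$ and passing to the limit in the pointwise optimality inequality \eqref{closed equation 1} using Lemmas \ref{convergence of r} and \ref{covergence}; convexity comes from the linear reformulation in Remark \ref{remark2}, as in your argument. You instead work entirely with the linearized form: you reduce the infimum over $\mathcal{P}(B)$ to an infimum over $B$, verify that $K_\Phi$ is a Carath\'eodory integrand in the test-function class, get non-emptiness from the Kuratowski--Ryll-Nardzewski measurable selection theorem applied to the argmin correspondence, and get closedness by integrating the optimality identity against $\lambda$, passing to the limit via the very definition of the weak-star topology, and then upgrading equality of integrals to $\lambda$-a.e.\ pointwise equality using the one-sided bound $\int_B K_\Phi(x,b)\,\Psi^*(x)(db)\ge\inf_{b}K_\Phi(x,b)$. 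Your version buys two things: it makes the measurable-selection step explicit rather than asserting non-emptiness is ``easy to see,'' and its closedness argument does not lean on the pointwise-in-$x$ convergence asserted in Lemma \ref{covergence} (which is delicate, since weak-star convergence of strategies only controls $\lambda$-integrated quantities) --- the integrate-then-localize trick is exactly the robust way to use this topology. What the paper's route buys is economy: Lemmas \ref{convergence of r} and \ref{covergence} are needed anyway for the upper semicontinuity argument in Lemma \ref{uppersemicont.}, so reusing them here keeps the proof short. The only point you should nail down is the joint measurability of $K_\Phi$ in $(x,b)$ needed for the measurable maximum theorem; this follows from the kernel structure of $P$, the measurability of $v_2^{*\Phi}$ and $\Phi$, and the continuity assumptions, but it deserves a sentence.
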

\begin{proof} 
	From Remark \ref{remark1}, we know that $\int_X e^{v(y)}\hat{P}_2(dy|x,\Phi(x),\psi)$ is continuous on $\mathcal{P}(A)\times\mathcal{P}(B)$ for each $x\in X$. As $B$ is compact, $\mathcal{P}(B)$ is also compact. Then it is easy to see that $H(\Phi)$ is  non-empty.
	Let  $\Psi_{m}^* \in H(\Phi)$ and as $S_2$ is compact, $\{\Psi^*_{m}\}$ has a convergent subsequence(denoted by the same sequence by abuse of notation)such that $\Psi^*_{m} \to \hat{\Psi}\in S_2$. Now for any $\psi\in \mathcal{P}(B)$
	\begin{equation}\label{closed equation 1}
		\begin{aligned}
			&\hat{c}_2(x,\Phi(x),\Psi_{m}^*(x))+ln \int_X e^{{v^{*\Phi}_2}(y)}\hat{P}_2(dy|x,\Phi(x),\Psi_{m}^*(x))\\&\hspace{5cm}\leq \hat{c}_2(x,\Phi(x),\psi)+ln \int_X e^{{v^{*\Phi}_2}(y)}\hat{P}_2(dy|x,\Phi(x),\psi).
		\end{aligned}
	\end{equation}
	Using Lemma \ref{convergence of r} and \ref{covergence}, from \eqref{closed equation 1} we get for any $\psi \in \mathcal{P}(B)$
	\begin{equation*}
		\begin{aligned}
			&\hat{c}_2(x,\Phi(x),\hat{\Psi}(x))+ln \int_X e^{{v^{*\Phi}_2}(y)}\hat{P}_2(dy|x,\Phi(x),\hat{\Psi}(x))\\&\hspace{4cm}\leq \hat{c}_2(x,\Phi(x),\psi)+ln \int_X e^{{v^{*\Phi}_2}(y)}\hat{P}_2(dy|x,\Phi(x),\psi)\hspace{0.25cm} \lambda-a.e.
		\end{aligned}
	\end{equation*}Hence it follows that $\hat{\Psi}\in H(\Phi)$ and therefore  $H(\Phi)$ is closed. Since $S_2$ is a compact metric space, it follows that $H(\Phi)$ is also compact. Using  Remark \ref{remark2} the convexity of  $H(\Phi)$ and  $H(\Psi)$ follows easily. By analogous arguments, $H(\Psi)$ is also non-empty compact subset of $S_2$.
	Hence  $H(\Phi,\Psi)$ is a non-empty compact convex  subset of $S_1\times S_2$.
\end{proof}

Next lemma proves the upper semi-continuity of a certain set valued map. This result will
be useful in establishing the existence of a Nash equilibrium in the space of stationary
Markov strategies.

\begin{lem}\label{uppersemicont.}
	Under  Assumptions \ref{assumption1}, \ref{assumption2}, \ref{assumption1.4}, \ref{assumption1.5},   \ref{assumption1.11} and \ref{assumption1.12}  the map $(\Phi,\Psi)\to H(\Phi,\Psi)$ from $S_1\times S_2 \to 2^{S_1\times S_2}$ is upper semi-continuous.
\end{lem}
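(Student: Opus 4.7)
The plan is to verify the closed-graph characterization of upper semi-continuity, which applies because $S_1\times S_2$ is a compact metrizable space in the relaxed-control topology. I take arbitrary sequences $\Phi_m \to \Phi$ in $S_1$ and $\Psi_m \to \Psi$ in $S_2$ together with $(\Phi_m^*,\Psi_m^*) \in H(\Phi_m,\Psi_m) = H(\Psi_m)\times H(\Phi_m)$, and pass to a subsequence (by compactness of $S_1\times S_2$) so that $(\Phi_m^*,\Psi_m^*) \to (\Phi^*,\Psi^*)$. The goal is then to show $\Psi^*\in H(\Phi)$ and $\Phi^*\in H(\Psi)$, and by symmetry I focus on the former.

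The main preparatory step I would carry out is to establish that the value-function pair $(\rho_2^{*\Phi_m}, v_2^{*\Phi_m})$ from Theorem \ref{theoremmain1} converges, along a further subsequence, to $(\rho_2^{*\Phi}, v_2^{*\Phi})$. Theorem \ref{theoremmain1} places $v_2^{*\Phi_m}$ in $\mathbb{B}_L(X)$ with $v_2^{*\Phi_m}(x_0)=0$, and Lemma \ref{boundedness} then makes $\{v_2^{*\Phi_m}\}$ uniformly bounded in sup norm; in addition $\rho_2^{*\Phi_m}\in[0,\bar c]$. Banach--Alaoglu in the $L^\infty(\lambda)$--$L^1(\lambda)$ duality yields a further subsequence along which $v_2^{*\Phi_m}\to \bar v$ in the weak-star topology and $\rho_2^{*\Phi_m}\to\bar\rho$ in $\mathbb{R}$. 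I would then pass to the limit in \eqref{main equation1} written for $\Phi_m$ to verify that $(\bar\rho,\bar v)$ satisfies the same equation for $\Phi$, using Lemma \ref{convergence of r} for continuity of $\hat c_2$ and Lemma \ref{covergence} for convergence of the integral terms. The uniqueness clause of Theorem \ref{theoremmain1} combined with the normalization then forces $\bar v = v_2^{*\Phi}$ ($\lambda$-a.e.) and $\bar\rho = \rho_2^{*\Phi}$.

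With this in hand I would pass to the limit in the defining inequality of $H(\Phi_m)$: for every fixed $\psi\in\mathcal{P}(B)$ and $\lambda$-a.e.\ $x$,
\[
\hat c_2(x,\Phi_m(x),\Psi_m^{*}(x)) + \ln\!\int_X e^{v_2^{*\Phi_m}(y)}\hat P_2(dy|x,\Phi_m(x),\Psi_m^{*}(x)) \leq \hat c_2(x,\Phi_m(x),\psi) + \ln\!\int_X e^{v_2^{*\Phi_m}(y)}\hat P_2(dy|x,\Phi_m(x),\psi).
\]
Applying Lemma \ref{convergence of r} to the $\hat c_2$ terms and Lemma \ref{covergence} to the exponential-integral terms (with $\Psi_m^*\to\Psi^*$ and $\Phi_m\to\Phi$ on the left, and $\psi$ held fixed on the right), the inequality passes to the limit and yields $\Psi^*\in H(\Phi)$. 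The verification that $\Phi^*\in H(\Psi)$ is entirely analogous, so the closed-graph property, and hence upper semi-continuity, holds.

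The hard part will be the identification step inside the exponential: weak-star convergence $v_2^{*\Phi_m}\to \bar v$ does not by itself give any useful convergence of $e^{v_2^{*\Phi_m}}$ because of the nonlinearity. I plan to handle this by rewriting \eqref{main equation1} in its equivalent form which is \emph{linear} in $w=e^v$, working with the transformed sequence $w_m := e^{v_2^{*\Phi_m}}$ (uniformly bounded between $e^{-L}$ and $e^{L}$), extracting a weak-star limit $w^*$ along a further subsequence, and then using the separable structure of Assumptions \ref{assumption1.11} and \ref{assumption1.12} to decouple the $a$- and $b$-integrals so that Lemma \ref{covergence} can be applied term by term. The identification $w^* = e^{\bar v}$ should then follow from pointwise convergence of the transformed right-hand side in $x$ (for each fixed $\psi$), which in turn forces pointwise convergence of $w_m(x)$ after dividing by $e^{\rho_2^{*\Phi_m}}$, reconciling the weak-star and exponential limits.
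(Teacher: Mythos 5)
Your proposal follows essentially the same route as the paper's proof: the closed-graph characterization of upper semi-continuity, compactness of $S_1\times S_2$ and weak-star compactness of the uniformly bounded sequence $\{v_2^{*\Phi_m}\}\subset\mathbb{B}_L(X)$ (via Lemma \ref{boundedness}), passage to the limit in the optimality equation and in the inequality defining $H(\Phi_m)$ using Lemmas \ref{convergence of r} and \ref{covergence}, and identification of the limit pair with $(\rho_2^{*\bar\Phi},v_2^{*\bar\Phi})$ through the uniqueness clause of Theorem \ref{theoremmain1}. The one substantive difference is the ``hard part'' you flag: the paper applies Lemma \ref{covergence} directly to $\ln\int_X e^{v_2^{*\Phi_m}(y)}\hat P_2(dy|\cdot)$ and writes the limit as $\ln\int_X e^{v_2(y)}\hat P_2(dy|\cdot)$, tacitly assuming that weak-star convergence $v_2^{*\Phi_m}\rightharpoonup v_2$ entails $e^{v_2^{*\Phi_m}}\rightharpoonup e^{v_2}$, which is not automatic; the same issue affects the pointwise convergence of the left-hand side $\rho^{*\Phi_m}_2+v_2^{*\Phi_m}(x)$. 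Your repair---rewriting \eqref{main equation1} in the form linear in $w_m=e^{v_2^{*\Phi_m}}$, extracting a weak-star limit $w^*$ of the uniformly bounded $w_m$, and using the pointwise (in $x$) convergence of the right-hand side to force pointwise convergence of $w_m$ and thereby identify $w^*$ with that pointwise limit $\lambda$-a.e.---is sound and closes a step the paper glosses over. In short, your argument is the paper's argument carried out more carefully, and it buys a rigorous treatment of the exponential nonlinearity under weak-star limits.
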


\begin{proof} 	Let $\Psi_{m}^* \in H(\Phi_{m})$. $\{\Phi_{m}\}$ has a convergent subsequence (denoted by the same sequence by abuse of notation) such that $\Phi_{m} \to \bar{\Phi}\in S_1$ and similarly  $\{\Psi_{m}^*\}$ has a subsequence too such that $\Psi_{m}^* \to \hat{\Psi}\in S_2$. Since, $\{v^{*\Phi_{m}}_2\}\in  \mathbb{B}_L(X)$ and $\{\rho^{*\Phi_{m}}_2\}$ is bounded so without loss of generality let $v^{*\Phi_{m}}_2\to v_2$ in the weak star sense and $\rho^{*\Phi_{m}}_2 \to \rho_2 $. Then since,
	\begin{equation}\label{key1}
		{\rho^{*\Phi_{m}}_2}+{v^{*\Phi_{m}}_2}=\hat{c}_2(x,\Phi_{m}(x),\Psi_{m}^*(x))+ln \int_X e^{{v^{*\Phi_{m}}_2}(y)}\hat{P}_2(dy|x,\Phi_{m}(x),\Psi_{m}^*(x))
	\end{equation}
	Using Lemma \ref{convergence of r}, \ref{covergence} and \ref{boundedness} it follows that
	\begin{equation}\label{keyA}
		\rho_2+v_2(x)= \hat{c}_2(x,\bar{\Phi}(x),\hat{\Psi}(x))+ln \int_X e^{{v_2(y)}}\hat{P}_2(dy|x,\bar{\Phi}(x),\hat{\Psi}(x))\hspace{0.5cm} \lambda-a.e.
	\end{equation}
	From \eqref{key1} for any $\psi \in \mathcal{P}(B)$ we get 
	\begin{equation*}\label{key2}
		{\rho^{*\Phi_{m}}_2}+{v^{*\Phi_{m}}_2}\leq \hat{c}_2(x,\Phi_{m}(x),\psi)+ln \int_X e^{{v^{*\Phi_{m}}_2}(y)}\hat{P}_2(dy|x,\Phi_{m}(x),\psi).
	\end{equation*}
	Again using Lemma \ref{convergence of r}, \ref{covergence} and  \ref{boundedness} it follows that
	\begin{equation}\label{key3}
		\rho_2+v_2(x)\leq \hat{c}_2(x,\bar{\Phi}(x),\psi)+ln \int_X e^{{v_2(y)}}\hat{P}_2(dy|x,\bar{\Phi}(x),\psi)\hspace{0.5cm} \lambda-a.e.
	\end{equation}
	Let $v^*_2(x)=v_2(x)-v_2(x_0)$. Then from \eqref{key3} we get, for any $\psi \in \mathcal{P}(B)$
	\begin{equation}\label{key4}
		\rho_2+v^*_2(x)\leq \hat{c}_2(x,\bar{\Phi}(x),\psi)+ln \int_X e^{{v^*_2(y)}}\hat{P}_2(dy|x,\bar{\Phi}(x),\psi)\hspace{0.5cm} \lambda-a.e.
	\end{equation} and from \eqref{keyA} we get 
	\begin{equation}\label{key5}
		\begin{aligned}
			\rho_2+v^*_2(x)&= \hat{c}_2(x,\bar{\Phi}(x),\hat{\Psi}(x))+ln \int_X e^{{v^*_2(y)}}\hat{P}_2(dy|x,\bar{\Phi}(x),\hat{\Psi}(x))\hspace{0.5cm} \\
			&\geq \inf_{\psi \in \mathcal{P}(B)}\bigg[\hat{c}_2(x,\bar{\Phi}(x),\psi)+ln \int_X e^{{v^*_2(y)}}\hat{P}_2(dy|x,\bar{\Phi}(x),\psi)\bigg]\hspace{0.5cm}
		\end{aligned} \Biggl\}\lambda-a.e.
	\end{equation}
	Since \eqref{key4} holds for every $\psi \in \mathcal{P}(B)$, from \eqref{key4} and \eqref{key5} we get 
	\begin{equation}\label{key7}
		\rho_2+v^*_2(x)=\inf_{\psi \in \mathcal{P}(B)}\bigg[\hat{c}_2(x,\bar{\Phi}(x),\psi)+ln \int_X e^{{v^*_2(y)}}\hat{P}_2(dy|x,\bar{\Phi}(x),\psi)\bigg]\hspace{0.5cm} \lambda-a.e. 
	\end{equation} with $v^*_2(x_0)=0$.
	Now by Theorem \ref{theoremmain1} we can say that \eqref{key7} has unique solution $(\rho^{*\bar{\Phi}}_2,v^{*\bar{\Phi}}_2)$(corresponds to $\bar{\Phi}\in S_1$) satisfying $v^{*\bar{\Phi}}_2(x_0)=0$. Therefore, $\rho_2=\rho^{*\bar{\Phi}}_2$ and $v^*_2=v^{*\bar{\Phi}}_2$. Thus, from \eqref{key5} and \eqref{key7} it follows that $\hat{\Psi}\in H(\bar{\Phi})$. 
	
	Suppose $\Phi_{m}^* \in H(\Psi_{m})$ and along a suitable subsequence $\Phi_{m}^* \to \hat{\Phi}\in S_1$ and $\Psi_{m} \to \bar{\Psi}\in S_2$. Then by similar arguments one can show  that $\hat{\Phi}\in H(\bar{\Psi})$. This proves that $(\hat{\Phi},\hat{\Psi})\in H(\bar{\Phi},\bar{\Psi})$. Hence the map $(\Phi,\Psi)\to H(\Phi_1,\Psi)$  is upper semi-continuous.
	
	Now we are now all set to show the existence is of the Nash equilibrium which is directly follows by using Fan's fixed point theorem \cite{Fan1952fixed}.
\end{proof}

\begin{thm}\label{Nash equilibrium}
	Suppose that the  Assumptions \ref{assumption1}, \ref{assumption2}, \ref{assumption1.4}, \ref{assumption1.5},  \ref{assumption1.11} and \ref{assumption1.12} are satisfied. Then   there exists a Nash equilibrium in the space of stationary strategies $S_1\times S_2$.
\end{thm}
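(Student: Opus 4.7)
The plan is to recast the existence of a Nash equilibrium as a fixed-point problem for the set-valued map
\[
\mathcal{H}: S_1\times S_2 \longrightarrow 2^{S_1\times S_2},\qquad (\Phi,\Psi)\longmapsto H(\Psi)\times H(\Phi),
\]
and then to apply Fan's fixed point theorem \cite{Fan1952fixed}; the substantive analytic work has in fact already been carried out in Lemmas \ref{nonempty closed convex } and \ref{uppersemicont.}, so what remains is essentially a verification of Fan's hypotheses followed by translating the fixed point back into the Nash inequalities.

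First I would verify the hypotheses of Fan's theorem. The relaxed-control topology introduced after Assumption \ref{assumption1.12} makes each of $S_1$ and $S_2$ a compact metrizable convex subset of a locally convex Hausdorff space (the duals $Y_i^*$ equipped with the weak-star topology), so the product $S_1\times S_2$ is itself a nonempty compact convex subset of a locally convex Hausdorff space; convexity is immediate from the fact that convex combinations of stochastic kernels are again stochastic kernels. By Lemma \ref{nonempty closed convex }, for every $(\Phi,\Psi)\in S_1\times S_2$ the image $\mathcal{H}(\Phi,\Psi)=H(\Psi)\times H(\Phi)$ is a nonempty, compact, convex subset of $S_1\times S_2$, and by Lemma \ref{uppersemicont.} the set-valued map $\mathcal{H}$ is upper semi-continuous.

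Fan's fixed point theorem then produces a pair $(\Phi^*,\Psi^*)\in S_1\times S_2$ satisfying $(\Phi^*,\Psi^*)\in\mathcal{H}(\Phi^*,\Psi^*)$, equivalently $\Phi^*\in H(\Psi^*)$ and $\Psi^*\in H(\Phi^*)$. I would then identify this fixed point as a Nash equilibrium as follows. The relation $\Psi^*\in H(\Phi^*)$ says that $\Psi^*$ attains the pointwise infimum in \eqref{main equation1} when player $1$ employs $\Phi^*$; by Theorem \ref{theoremmain1} together with the representation \eqref{rho representation1} this forces
\[
J^{\Phi^*,\Psi^*}_{2}(x)=\rho^{*\Phi^*}_{2}=\inf_{\pi^2\in\Pi_2}J^{\Phi^*,\pi^2}_{2}(x)\le J^{\Phi^*,\pi^2}_{2}(x)\qquad\forall\,\pi^2\in\Pi_2,\;x\in X.
\]
Symmetrically, $\Phi^*\in H(\Psi^*)$ combined with Theorem \ref{theoremmain2} and \eqref{rho representation2} yields $J^{\pi^1,\Psi^*}_{1}(x)\ge J^{\Phi^*,\Psi^*}_{1}(x)$ for every $\pi^1\in\Pi_1$ and $x\in X$. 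These are exactly the two defining inequalities of a Nash equilibrium in stationary strategies.

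The genuine difficulties---nonemptiness, compactness, convexity, and upper semi-continuity of the best-response correspondence under the weak-star relaxed-control topology---have already been overcome in the preceding lemmas, so the only residual care required here is bookkeeping: confirming that $S_1\times S_2$ with the product weak-star topology satisfies the locally-convex Hausdorff hypothesis of Fan's theorem, and that the pointwise-a.e.\ minimizer characterization furnished by Theorems \ref{theoremmain1} and \ref{theoremmain2} indeed matches the definition of a Nash equilibrium in $\Pi_1\times\Pi_2$ (which it does, since the infimum over $\Pi_i$ in \eqref{rho representation1}--\eqref{rho representation2} is attained by a stationary strategy).
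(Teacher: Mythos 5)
Your proposal follows the same route as the paper: verify the hypotheses of Fan's fixed point theorem using Lemmas \ref{nonempty closed convex } and \ref{uppersemicont.}, extract a fixed point $(\Phi^*,\Psi^*)$ of $(\Phi,\Psi)\mapsto H(\Psi)\times H(\Phi)$, and then use Theorems \ref{theoremmain1} and \ref{theoremmain2} together with the representations \eqref{rho representation1}--\eqref{rho representation2} to read off the two Nash inequalities. That part is correct and matches the paper's argument.

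There is, however, one genuine omission in your final step. You assert that the inequalities $J^{\Phi^*,\Psi^*}_{2}(x)\le J^{\Phi^*,\pi^2}_{2}(x)$ and $J^{\Phi^*,\Psi^*}_{1}(x)\le J^{\pi^1,\Psi^*}_{1}(x)$ hold for \emph{all} $x\in X$, but this does not follow directly. The spaces $S_1$ and $S_2$ carry the relaxed-control (weak-star) topology only after identifying strategies that agree $\lambda$-a.e., and correspondingly the optimality relations obtained through the weak-star limits in Lemma \ref{uppersemicont.} (e.g.\ \eqref{key7}) hold only $\lambda$-almost everywhere. Consequently the fixed point furnished by Fan's theorem is a priori only a $\lambda$-equilibrium: the pointwise-minimum characterization of membership in $H(\Phi^*)$ and $H(\Psi^*)$, and hence the Nash inequalities, are guaranteed only on a set of full $\lambda$-measure. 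The paper closes this gap by invoking a construction analogous to Theorem 1 of \cite{Parthasarathy1982existence}, which modifies the equilibrium pair on the exceptional $\lambda$-null set so that the optimality equations, and therefore the Nash inequalities, hold for every initial state. Your write-up skips this upgrade entirely; without it the conclusion as stated (a Nash equilibrium for all $x\in X$) is not justified.
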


\begin{proof} Using Lemma \ref{nonempty closed convex } and  \ref{uppersemicont.} from Fan's fixed point theorem, it follows that there exists a fixed point $(\Phi^*,\Psi^*)\in (S_1\times S_2)$ for the map $(\Phi,\Psi)\to H(\Phi,\Psi)$ from $S_1\times S_2 \to 2^{S_1\times S_2}$. 	This implies that $(\rho_1^{*\Psi^*}, v_1^{*\Psi^*}),(\rho_2^{*\Phi^*}, v_2^{*\Phi^*})$ satisfy the following coupled optimality equations:

\begin{equation}\label{optimality1}
	\begin{aligned}
		\rho_1^{*\Psi^*}+ v_1^{*\Psi^*}&=\inf_{\varphi \in \mathcal{P}(A)}\bigg[\hat{c}_1(x,\varphi,\Psi^*(x))+ln \int_X e^{v_1^{*\Psi^*}}(y)\hat{P}_1(dy|x,\varphi,\Psi^*(x))\bigg] \\
		&=\bigg[\hat{c}_1(x,\Phi^*(x),\Psi^*(x))+ln \int_X e^{v_1^{*\Psi^*}}(y)\hat{P}_1(dy|x,\Phi^*(x),\Psi^*(x))\bigg],
	\end{aligned}
\end{equation}

and

\begin{equation}\label{optimality2}
	\begin{aligned}
		\rho_2^{*\Phi^*}+ v_2^{*\Phi^*}&=\inf_{\psi \in \mathcal{P}(B)}\bigg[\hat{c}_2(x,\Phi^*(x),\psi)+ln \int_X e^{v_2^{*\Phi^*_{1}}}\hat{P}_2(dy|x,\Phi^*(x),\psi)\bigg] \\
		&=\bigg[\hat{c}_2(x,\Phi^*(x),\Psi^*(x))+ln \int_X e^{v_2^{*\Phi^*_{1}(x)}}\hat{P}_1(dy|x,\Phi^*(x),\Psi^*(x))\bigg].
	\end{aligned} 
\end{equation}

Now by Theorem \ref{theoremmain1}, from \eqref{optimality1}, it follows that
\begin{equation}\label{Nash1}
	\rho_1^{*\Psi^*}=\inf_{\pi^1\in \Pi_1}J^{\pi^1,\Psi^*}_1 =J^{\Phi^*,\Psi^*}_1 .
\end{equation}

Similarly, by Theorem \ref{theoremmain2}, from \eqref{optimality2}, it follows that
\begin{equation}\label{Nash2}
	\rho^{*\Phi^*}_2=\inf_{\pi^2\in \Pi_2}J^{\Phi^*,\pi^2}_2=J^{\Phi^*,\Psi^*}_2.
\end{equation}

Thus, from equations \eqref{Nash1} and \eqref{Nash2}, we get
\begin{equation}\label{Nash}
	\begin{aligned}
		& J^{\pi_1,\Psi^*}_1 \geq J^{\Phi^*,\Psi^*}_1, \forall \pi^1 \in \Pi_1, \\
		& J^{\Phi^*,\pi^2}_2\geq J^{\Phi^*,\Psi^*}_2, \forall \pi^2 \in \Pi_2 .
	\end{aligned}\Biggl\}
\end{equation}

Hence $(\Phi^*,\Psi^*) \in S_1 \times S_2$ obviously forms a $ \lambda$-equilibrium stationary strategy (i.e., \eqref{Nash} holds in a set of $ \lambda$-measure 1). Then by a construction analogous to Theorem 1 of \cite{Parthasarathy1982existence} the existence of the desired Nash equilibrium
follows.
\end{proof}

\bibliographystyle{plain}
\bibliography{bib}

\begin{thebibliography}{10}

\bibitem{basar11}
Tansu Alpcan and Tamer Ba\c~sar.
\newblock {\em Network security}.
\newblock Cambridge University Press, Cambridge, 2011.
\newblock A decision and game-theoretic approach.

\bibitem{Ghosh14}
Arnab Basu and Mrinal~K Ghosh.
\newblock Zero-sum risk-sensitive stochastic games on a countable state space.
\newblock {\em Stochastic Process. Appl.}, 124(1):961--983, 2014.

\bibitem{basu2018nonzero}
Arnab Basu and Mrinal~K Ghosh.
\newblock Nonzero-sum risk-sensitive stochastic games on a countable state
  space.
\newblock {\em Mathematics of Operations Research}, 43(2):516--532, 2018.

\bibitem{Rieder17}
N.~Bauerle and U.~Rieder.
\newblock Zero-sum risk sensitive stochastic games.
\newblock {\em Stoch. Processes and Their Appl}, 127:622--642, 2017.

\bibitem{Bertsekas1996stochastic}
Dimitri Bertsekas and Steven~E Shreve.
\newblock {\em Stochastic optimal control: the discrete-time case}, volume~5.
\newblock Athena Scientific, 1996.

\bibitem{Sheu05}
Tomasz~R. Bielecki, Stanley~R. Pliska, and Shuenn-Jyi Sheu.
\newblock Risk sensitive portfolio management with {C}ox-{I}ngersoll-{R}oss
  interest rates: the {HJB} equation.
\newblock {\em SIAM J. Control Optim.}, 44(5):1811--1843, 2005.

\bibitem{Borkar23}
Anup Biswas and Vivek~S. Borkar.
\newblock Ergodic risk-sensitive control---a survey.
\newblock {\em Annu. Rev. Control}, 55:118--141, 2023.

\bibitem{Stettner99}
Giovanni~B Di~Masi and Lukasz Stettner.
\newblock Risk-sensitive control of discrete-time markov processes with
  infinite horizon.
\newblock {\em SIAM Journal on Control and Optimization}, 38(1):61--78, 1999.

\bibitem{Fan1952fixed}
Ky~Fan.
\newblock Fixed-point and minimax theorems in locally convex topological linear
  spaces.
\newblock {\em Proceedings of the National Academy of Sciences},
  38(2):121--126, 1952.

\bibitem{ghosh1998stochastic}
Mrinal~K Ghosh and Arunabha Bagchi.
\newblock Stochastic games with average payoff criterion.
\newblock {\em Applied Mathematics and Optimization}, 38:283--301, 1998.

\bibitem{ghosh2023discrete}
Mrinal~K Ghosh, Subrata Golui, Chandan Pal, and Somnath Pradhan.
\newblock Discrete-time zero-sum games for markov chains with risk-sensitive
  average cost criterion.
\newblock {\em Stochastic Processes and their Applications}, 158:40--74, 2023.

\bibitem{Hernandez1996risk}
Daniel Hern{\'a}ndez-Hern{\'a}ndez and Steven~I Marcus.
\newblock Risk sensitive control of markov processes in countable state space.
\newblock {\em Systems \& control letters}, 29(3):147--155, 1996.

\bibitem{Hernandez2012adaptive}
On{\'e}simo Hern{\'a}ndez-Lerma.
\newblock {\em Adaptive Markov control processes}, volume~79.
\newblock Springer Science \& Business Media, 2012.

\bibitem{Himmelberg1976existence}
CJ~Himmelberg, Thiruvenkatachari Parthasarathy, TES Raghavan, and FS~Van~Vleck.
\newblock Existence of p-equilibrium and optimal stationary strategies in
  stochastic games.
\newblock {\em Proceedings of the American Mathematical Society},
  60(1):245--251, 1976.

\bibitem{Matheson72}
Ronald~A. Howard and James~E. Matheson.
\newblock Risk-sensitive {M}arkov decision processes.
\newblock {\em Management Sci.}, 18:356--369, 1971/72.

\bibitem{Jackson08}
Matthew~O. Jackson.
\newblock {\em Social and economic networks}.
\newblock Princeton University Press, Princeton, NJ, 2008.

\bibitem{Altman02}
Andrzej~S. Nowak and Eitan Altman.
\newblock {$\epsilon$}-equilibria for stochastic games with uncountable state
  space and unbounded costs.
\newblock {\em SIAM J. Control Optim.}, 40(6):1821--1839, 2002.

\bibitem{Parthasarathy1982existence}
T~Parthasarathy.
\newblock Existence of equilibrium stationary strategies in discounted
  stochastic games.
\newblock {\em Sankhy{\=a}: The Indian Journal of Statistics, Series A}, pages
  114--127, 1982.

\bibitem{Roughgarden_2016}
Tim Roughgarden.
\newblock {\em Twenty Lectures on Algorithmic Game Theory}.
\newblock Cambridge University Press, 2016.

\bibitem{Warga1967functions}
Jack Warga.
\newblock Functions of relaxed controls.
\newblock {\em SIAM Journal on Control}, 5(4):628--641, 1967.

\bibitem{Chen_discrete19}
Qingda Wei and Xian Chen.
\newblock Nonzero-sum expected average discrete-time stochastic games: the case
  of uncountable spaces.
\newblock {\em SIAM J. Control Optim.}, 57(6):4099--4124, 2019.

\bibitem{wei2019risk}
Qingda Wei and Xian Chen.
\newblock Risk-sensitive average equilibria for discrete-time stochastic games.
\newblock {\em Dynamic Games and Applications}, 9:521--549, 2019.

\bibitem{wei2021risksensitivenonzero}
Qingda Wei and Xian Chen.
\newblock Nonzero-sum risk-sensitive average stochastic games: the case of
  unbounded costs.
\newblock {\em Dynamic games and Applications}, 11:835--862, 2021.

\bibitem{whittle1981risk}
Peter Whittle.
\newblock Risk-sensitive linear/quadratic/gaussian control.
\newblock {\em Advances in Applied Probability}, 13(4):764--777, 1981.

\end{thebibliography}

\end{document}